\newtheorem{theorem}{Theorem}
\newtheorem{lemma}[theorem]{Lemma}
\newtheorem{corollary}[theorem]{Corollary}
\newtheorem{proposition}[theorem]{Proposition}
\newtheorem{conjecture}[theorem]{Conjecture}
\newtheorem{remark}[theorem]{Remark}
\font\sc=rsfs10
\newcommand{\cC}{\sc\mbox{C}\hspace{1.0pt}}
\newcommand{\cJ}{\sc\mbox{J}\hspace{1.0pt}}
\newcommand{\cP}{\sc\mbox{P}\hspace{1.0pt}}
\font\scc=rsfs7
\newcommand{\ccC}{\scc\mbox{C}\hspace{1.0pt}}
\newcommand{\ccP}{\scc\mbox{P}\hspace{1.0pt}}
\newcommand{\ccJ}{\scc\mbox{J}\hspace{1.0pt}}
\begin{document}

\title[Kostant's problem for fully commutative permutations]
{Kostant's problem for fully commutative permutations}
\author[M.~Mackaay, V.~Mazorchuk and V.~Miemietz]{Marco Mackaay, 
Volodymyr Mazorchuk and Vanessa Miemietz}

\begin{abstract}
We give a complete combinatorial answer to Kostant's problem for simple 
highest weight modules indexed by fully commutative permutations.
We also propose a reformulation of Kostant's problem in the context of 
fiab bicategories and classify annihilators of simple objects 
in the principal birepresentations of such bicategories generalizing
the Barbasch--Vogan theorem for Lie algebras.
\end{abstract}

\maketitle

\section{Introduction and description of the results}\label{s1}

Kostant's problem, as defined and popularized by Joseph in \cite{Jo}, is
a famous open problem in representation theory of Lie algebras. It asks
for which simple modules $L$ over a semi-simple complex finite dimensional
Lie algebra the universal enveloping algebra surjects onto the space
of adjointly finite linear endomorphisms of $L$. Although several 
(both positive and negative) results are known, see Subsection~\ref{s3.6}
for a historical overview, the general case is wide open even for
simple highest weight modules in the principal block of BGG 
category $\mathcal{O}$.

Knowing the answer to Kostant's problem for a particular simple module 
is important for potential applications, for example, understanding 
the  structure of induced modules, see \cite{KhM,MS}, using the approach of
establishing equivalences of categories for Lie algebra modules proposed
in \cite{MiSo}.

As described in Subsection~\ref{s3.6}, the existing results on Kostant's 
problem can be divided into four classes:
\begin{itemize}
\item there are some classes of simple modules for which the answer is
known to be positive;
\item there are some classes of simple modules for which the answer is
known to be negative;
\item there are some results that relate the answer for one simple
module to the answer for another simple module;
\item there are some complete classification results for 
Lie algebras of small rank.
\end{itemize}
To the best of our knowledge, none of the existing results provides 
a complete solution to Kostant's problem for
some natural general class of simple modules which contains both
simple modules with positive and negative answers.
The main result of the present paper is the first result of this kind.

We look at a natural family of permutations (elements of the symmetric
group $S_n$) which are called {\em fully commutative}. The latter means that 
any two reduced expressions for such an element can be transformed one
into another just by using the commutativity relations for simple reflections.
The number of fully commutative elements in $S_n$ is given by the 
Catalan number $C_n=\frac{1}{n+1}\binom{2n}{n}$. Under the 
Robinson--Schensted correspondence, fully commutative elements are
exactly those permutations which correspond to two-row partitions.
From the point of view
of the Kazhdan--Lusztig combinatorics in type $A$, it is well known that 
fully commutative elements form a union of two-sided Kazhdan--Lusztig cells
and that, for general $n$, the value of Lusztig's $\mathbf{a}$-function
on fully commutative elements in $S_n$ can be arbitrarily large. This 
demonstrates that the class of fully commutative permutations is quite large. 
Fully commutative permutations naturally index a basis in the
Temperley-Lieb algebra, which is a certain quotient of the Hecke algebra of $S_n$.

The main result of the present paper, Theorem~\ref{thm5.2.1},
gives a complete combinatorial classification of the fully commutative
permutations such that Kostant's problem for the corresponding simple
modules in the principal block of BGG category $\mathcal{O}$ has 
positive answer (we call such elements {\em Kostant positive}). 
The answer is both quite beautiful and rather non-trivial.
We first define certain special fully commutative involutions.
These are permutations of the following form:

\resizebox{\textwidth}{!}{
$
\xymatrix@R=20mm@C=12mm{
1\ar@{-}[d]&\dots&i-j-1\ar@{-}[d]&i-j\ar@{-}[drrrr]&i-j+1\ar@{-}[drrrr]&\dots
&i\ar@{-}[drrrr]&i+1\ar@{-}[dllll]&i+2\ar@{-}[dllll]&\dots&i+j+1\ar@{-}[dllll]&
i+j+2\ar@{-}[d]&\dots&n\ar@{-}[d]\\
1&\dots&i-j-1&i-j&i-j+1&\dots&i&i+1&i+2&\dots&i+j+1&i+j+2&\dots&n
}
$
}

For a special involution, we call the area where this involution acts non-trivially 
its {\em  support}. In Theorem~\ref{thm5.2.1}, we show that a fully 
commutative involution $w$ is Kostant positive if and only if it is 
a product of distant special involutions, in the sense that the supports of any
two special factors in $w$ are separated in the diagram of $w$ by at least
one vertical line.

Our approach to Kostant's problem for fully commutative elements is
crucially based on the availability of the following three things:
\begin{itemize}
\item the explicit description of the Serre subcategory of category
$\mathcal{O}$ generated by the simple objects indexed by 
fully commutative elements and the
action of projective functors on this category, due to
Brundan and Stroppel, see \cite{BS};
\item K{\aa}hrstr{\"o}m's conjectural combinatorial reformulation of
Kostant's problem, see \cite[Conjecture~1.2]{KMM};
\item the $2$-representation theoretic approach to K{\aa}hrstr{\"o}m's 
conjecture in relation to Kostant's problem as developed 
by Ko, Mr{\dj}en and the second author in \cite[Theorem~B]{KMM}.
\end{itemize}
The idea behind the proof is to use K{\aa}hrstr{\"o}m's
combinatorial reformulation of Kostant's problem to reduce it
to the question of solvability of certain 
equations in the Temperley-Lieb algebra. Then we establish,
in the appropriate cases, the solvability of these 
equations by explicitly giving the solutions. In the remaining
cases, we establish the impossibility of solving these  
equations by a rather technical case-by-cases analysis.

Our answer to Kostant's problem for fully commutative elements
is explicit enough to compute its asymptotic behavior.
More precisely, in Theorem~\ref{thm5.6.1}, we prove the following:
\begin{itemize}
\item asymptotically, the answer to Kostant's problem for 
fully commutative elements is almost surely negative;
\item asymptotically, the answer to Kostant's problem for 
fully commutative involutions is almost surely negative;
\item asymptotically, the answer to Kostant's problem for 
fully commutative elements  (or involutions) of a fixed $\mathbf{a}$-value 
is almost surely positive.
\end{itemize}

From its very definition, Kostant's problem is intimately connected
with the problem of  understanding the annihilators of simple 
modules over semi-simple Lie algebras. These 
annihilators are classified combining two classical theorems
of Duflo and Barbasch--Vogan. In the last section of 
the paper, Section~\ref{s6}, we provide an analogue of the
Barbasch--Vogan Theorem for a class of 
bicategories known as fiab bicategories. We also
propose a reformulation of Kostant's problem in this more 
general context.

\vspace{1mm}

\subsection*{Acknowledgments}
The first author is partially supported by Funda{\c c}{\~a}o para a 
Ci{\^ e}ncia e a Tecnologia (Portugal),
projects PTDC/MAT-PUR/31089/2017 (Higher Structures
and Applications) and UID/MAT/04459/2013 (Center for Mathematical Analysis, Geometry 
and Dynamical Systems - CAMGSD).
The second author is partially supported by the Swedish Research Council.
The third author is partially supported by EPSRC grant
EP/S017216/1. We thank Darij Grinberg for helpful comments.
{ We thank the referees for helpful comments}

\section{Category $\mathcal{O}$ preliminaries}\label{s2}

\subsection{Setup}\label{s2.1}

We work over $\mathbb{C}$. 

Let $\mathfrak{g}$ denote a  semi-simple finite dimensional Lie algebra
with a fixed triangular decomposition 
\begin{equation}\label{eq1.1}
\mathfrak{g}=\mathfrak{n}_-\oplus \mathfrak{h}\oplus \mathfrak{n}_+. 
\end{equation}
Here $\mathfrak{h}$ is a Cartan subalgebra. We denote by $W$ the 
corresponding Weyl group, by $S$ the set of all  simple
reflections in $W$ associated to the triangular decomposition
\eqref{eq1.1} and by $w_0$ the longest element of $W$.

\subsection{Category $\mathcal{O}$}\label{s2.2}

Consider the BGG category $\mathcal{O}$ associated to the triangular decomposition
\eqref{eq1.1}, see \cite{BGG,Hu}. {Explicitly, this is the full subcategory of the category of all $\mathfrak{g}$-modules whose objects are finitely generated, diagonalizable with respect to the action of $\mathfrak{h}$ and locally finite with respect to the action of $\mathfrak{n}^+$.}

\subsection{Principal block}\label{s2.3}

Let $\mathcal{O}_0$ denote the principal  block of $\mathcal{O}$, that is the
direct summand containing the trivial $\mathfrak{g}$-module. Up to isomorphism,
the simple  objects in $\mathcal{O}_0$ are the simple highest weight modules
$L_w:=L(w\cdot 0)$, where $0\in \mathfrak{h}^*$ is the zero weight and
$w\in W$.  Here $\cdot$ is the usual dot-action of $W$ on $\mathfrak{h}^*$ defined, for 
$\lambda\in\mathfrak{h}^*$,  via $w\cdot \lambda=w(\lambda+\rho)-\rho$, where
$\rho$ is half of the sum of all  positive roots.

For $w\in W$, we denote by $\Delta_w$ the Verma cover 
and by $P_w$ the indecomposable projective cover of
$L_w$. Let $A$ be the opposite of the endomorphism algebra of the direct sum
of the $P_w$, taken over all $w\in W$. Then $A$ is a finite dimensional
associative algebra and the category $A$-mod of
finite dimensional $A$-modules is equivalent to $\mathcal{O}_0$.

\subsection{Projective functors}\label{s2.4}

For $w\in W$,  we denote by $\theta_w$ the unique, up to isomorphism,
indecomposable projective endofunctor of $\mathcal{O}_0$, in the sense
of \cite{BG}, {determined by the property
$\theta_w\, P_e\cong P_w$}.

We denote by $\cP$ the monoidal category of all projective functors 
acting on $\mathcal{O}_0$. Thanks to Soergel's combinatorial description,
$\cP$ is equivalent to the monoidal category of Soergel bimodules 
over the coinvariant algebra associated to $W$.

\subsection{Graded lift}\label{s2.5}

The algebra $A$ is Koszul, in particular, it has the corresponding
Koszul $\mathbb{Z}$-grading, see \cite{So,St}.
We denote by $\mathcal{O}_0^\mathbb{Z}$ the category of all
finite dimensional $\mathbb{Z}$-graded $A$-modules.

All structural modules in $\mathcal{O}_0$ admit graded lifts.
For indecomposable modules,  these graded lifts are unique, up to isomorphism
and grading shift. For indecomposable structural modules, there is a 
preferred graded lift, called standard lift.

Abusing notation, we will denote the ungraded 
objects in $\mathcal{O}_0$ by the same symbols as their standard graded lifts.
We use a similar convention for the graded version  of
projective functors, see \cite{St}, and denote by $\cP^\mathbb{Z}$
the monoidal category of all graded projective functors 
acting on $\mathcal{O}_0^\mathbb{Z}$.

\subsection{Hecke algebra combinatorics}\label{s2.6}

Let $\mathbf{H}=\mathbf{H}(W,S)$ be the Hecke algebra of $W$
over $\mathbb{Z}[v,v^{-1}]$ (in the normalization of \cite{So2}).
The algebra $\mathbf{H}$ has the standard basis $\{H_w\,:\,w\in W\}$  and the 
Kazhdan--Lusztig basis $\{\underline{H}_w\,:\,w\in W\}$, see \cite{KL}.

The Grothendieck group {$\mathbf{Gr}[\mathcal{O}_0^{\mathbb{Z}}]$} is isomorphic to
$\mathbf{H}$ by sending $[\Delta_w]$ to $H_w$. This isomorphism
sends $[P_w]$ to $\underline{H}_w$, see \cite{BB81,BK81}.

The split Grothendieck group $\mathbf{Gr}_\oplus[\cP^\mathbb{Z}]$
is isomorphic to $\mathbf{H}$ by sending $[\theta_w]$ to $\underline{H}_w$.
The action of $\cP^\mathbb{Z}$ on $\mathcal{O}_0^\mathbb{Z}$
decategorifies to the right regular $\mathbf{H}$-module, see \cite{So3}.

We denote by $\leq_L$, $\leq_R$ and $\leq_J$ the Kazhdan--Lusztig
left, right and two-sided pre-orders on $W$, respectively. 
The associated equivalence classes are called 
left, right and two-sided cells. Each left and right
cell contains a distinguished involution, also called the Duflo involution.
In type $A$, all involutions are Duflo involutions.

\subsection{Indecomposability conjecture}\label{s2.7}

The following conjecture is proposed in \cite[Conjecture~2]{KM}.

\begin{conjecture}\label{conj1}
If $\mathfrak{g}\cong\mathfrak{sl}_n$, then, for any 
$x,y\in W$, the module $\theta_x  L_y$ is either indecomposable or zero.
\end{conjecture}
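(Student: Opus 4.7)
My plan is to reduce the indecomposability question to a question about endomorphism algebras, then exploit the categorification of the Hecke algebra by projective functors to analyze it combinatorially. Concretely, $\theta_x L_y$ is indecomposable (or zero) if and only if $\mathrm{End}_{\mathcal{O}_0}(\theta_x L_y)$ is a local ring (or zero). Since $\theta_x$ has right adjoint $\theta_{x^{-1}}$, this endomorphism algebra is naturally identified with
$$\mathrm{Hom}_{\mathcal{O}_0}(L_y,\theta_{x^{-1}}\theta_x L_y),$$
and the first step is to decompose $\theta_{x^{-1}}\theta_x$ into indecomposables via the Kazhdan--Lusztig product $\underline{H}_{x^{-1}}\underline{H}_x=\sum_z \gamma_{x^{-1},x}^z\underline{H}_z$ (with coefficients in $\mathbb{Z}_{\geq 0}[v,v^{-1}]$). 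This reduces the problem to controlling the Hom-spaces $\mathrm{Hom}(L_y,\theta_z L_y)$ and how they glue into a unital algebra.

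Second, I would use the graded lift $\mathcal{O}_0^{\mathbb{Z}}$ so that $\mathrm{End}(\theta_x L_y)$ becomes a positively graded algebra whose degree-zero part should, if one is lucky, be one-dimensional. Because $L_y$ itself has endomorphism ring $\mathbb{C}$, the issue is entirely about higher graded pieces. Here the Ka\aa hrstr\"om-type reformulation invoked in the paper via \cite{KMM} is crucial: for each $z$ appearing in the decomposition of $\theta_{x^{-1}}\theta_x$, it converts $\dim\mathrm{Hom}(L_y,\theta_z L_y)$ into a combinatorial question about the right-action of the Hecke algebra (in type $A$, a Temperley--Lieb-type question when $y$ is fully commutative). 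The plan is then an induction on $\ell(x)$: for a simple reflection $s$, the fact that $\theta_s L_y$ is zero or indecomposable is classical (Irving, via self-dual projectives); the inductive step would factor $\theta_x$ as the unique summand of $\theta_s\theta_{x'}$ surviving the relevant grading, and argue that no splitting of $\theta_s\theta_{x'}L_y$ is compatible with the projection onto $\theta_x L_y$.

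The hard part will be the inductive step: since $\theta_s \theta_{x'}$ is generally decomposable as a projective functor, one cannot a priori deduce indecomposability of a summand from that of $\theta_{x'}L_y$, and the combinatorial multiplicities on the right-hand side are controlled by Kazhdan--Lusztig polynomials, whose precise behaviour is subtle. The core obstruction is what one might call \emph{exotic idempotents}: even if each individual $\mathrm{Hom}(L_y,\theta_z L_y)$ contributes only nilpotents, different $z$'s could in principle combine into a non-local algebra. Ruling this out seems to require a genuinely type-$A$ input, such as the positivity/saturation arising from the geometry of type-$A$ flag varieties or from the Brundan--Stroppel description of the fully commutative piece (which is consistent with the conjecture being stated only for $\mathfrak{sl}_n$, since analogous statements fail in other types). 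I would expect that a full proof would first establish the conjecture in the fully commutative case, using the Temperley--Lieb combinatorics highlighted in the introduction, and then attempt to bootstrap to general cells via cell-theoretic reductions.
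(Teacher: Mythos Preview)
The statement you are trying to prove is a \emph{conjecture} in the paper, not a theorem; the paper does not prove it in general. What the paper does establish (in Subsection~4.3, by quoting Brundan--Stroppel) is the special case where $y$ is fully commutative: there, the relevant parabolic block is equivalent to the module category of a diagrammatic algebra, and \cite[Theorem~4.11(iii)]{BS} translates into the statement that $\theta_x L_y$ has simple top (hence is indecomposable) or is zero. For arbitrary $y$ the conjecture remains open.

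Regarding your proposal itself: the reduction to $\mathrm{End}(\theta_x L_y)\cong\mathrm{Hom}(L_y,\theta_{x^{-1}}\theta_x L_y)$ and the use of the graded lift are sound preliminary moves, but they do not by themselves produce a proof, and you essentially say so. The inductive scheme on $\ell(x)$ has a genuine gap at the step you flag: writing $\theta_x$ as a summand of $\theta_s\theta_{x'}$ gives you no control over whether the projection of an idempotent in $\mathrm{End}(\theta_s\theta_{x'}L_y)$ to the $\theta_x$-summand is trivial, precisely because the other summands $\theta_z$ with $z<x$ contribute to the endomorphism algebra in ways governed by Kazhdan--Lusztig combinatorics that you cannot bound inductively. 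Your ``exotic idempotents'' obstruction is exactly the problem, and no argument in your outline addresses it. The suggestion to first do the fully commutative case and then ``bootstrap via cell-theoretic reductions'' is not a strategy: there is no known reduction from general cells to fully commutative ones for this question, and the paper itself points out (Subsection~5.6) that already the elements $s_1s_2s_1$ and $s_2s_3s_2$ in $S_5$ behave differently with respect to Kostant's problem, indicating that the non-fully-commutative regime is qualitatively harder.
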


We will denote by $\mathbf{KM}(x,y)\in\{\mathtt{true},\mathtt{false}\}$ the value of the claim
``the module $\theta_x  L_y$ is either indecomposable or zero''. We also denote
by $\mathbf{KM}(*,y)$ the conjunction of the $\mathbf{KM}(x,y)$ over all $x\in W$.

\section{Kostant's problem}\label{s3}

\subsection{Annihilators of simple modules}\label{s3.1}

Let $L$ be a simple $\mathfrak{g}$-module. By Dixmier--Schur's Lemma, $L$
{has a central character}, say $\chi:Z(\mathfrak{g})\to \mathbb{C}$, where 
$Z(\mathfrak{g})$ is the center of the universal enveloping algebra 
$U(\mathfrak{g})$ of $\mathfrak{g}$. Assume that $L$ is such that 
$\chi$ coincides with the central character of the trivial $\mathfrak{g}$-module.

Then Duflo's Theorem \cite{Du} asserts that there exists $w\in W$ such that
the annihilator $\mathrm{Ann}_{U(\mathfrak{g})}(L)$ of $L$ in $U(\mathfrak{g})$
coincides with $\mathrm{Ann}_{U(\mathfrak{g})}(L_w)$.

Further, a result of Barbasch and Vogan \cite{BV1,BV2} says that,
given $x,y\in W$, we have $\mathrm{Ann}_{U(\mathfrak{g})}(L_x)\subset 
\mathrm{Ann}_{U(\mathfrak{g})}(L_y)$ if and only if $x\geq_L y$.

\subsection{Harish-Chandra bimodules}\label{s3.2}

Recall, see \cite[Kapitel~6]{Ja}, that a  $\mathfrak{g}$-$\mathfrak{g}$-bimodule
is called a Harish-Chandra bimodule if it is finitely generated and, additionally,
if the adjoint action of $\mathfrak{g}$ on it is locally finite and has finite multiplicities.

A typical example of a Harish-Chandra bimodule is the bimodule
$U(\mathfrak{g})/\mathrm{Ann}_{U(\mathfrak{g})}(M)$, for $M\in \mathcal{O}_0$.

Here is another example: let $M,N$ be objects in $\mathcal{O}_0$, then
the space $\mathcal{L}(M,N)$ of linear maps from $M$ to $N$ on which the adjoint 
action of $\mathfrak{g}$ is locally finite is a Harish-Chandra bimodule.
If $M=N$, then $U(\mathfrak{g})/\mathrm{Ann}_{U(\mathfrak{g})}(M)\subset 
\mathcal{L}(M,M)$.

\subsection{Classical Kostant's problem}\label{s3.3}

Kostant's problem, as advertised in \cite{Jo}, is formulated as follows:

{\bf Kostant's  Problem.} For which $w\in W$ is the embedding 
$$U(\mathfrak{g})/\mathrm{Ann}_{U(\mathfrak{g})}(L_w)\hookrightarrow 
\mathcal{L}(L_w,L_w)$$ an isomorphism?

We will denote by $\mathbf{K}(w)=\mathbf{K}_\mathfrak{g}(w)
\in\{\mathtt{true},\mathtt{false}\}$ the logical value of the claim
``the embedding  $U(\mathfrak{g})/\mathrm{Ann}_{U(\mathfrak{g})}(L_w)\hookrightarrow 
\mathcal{L}(L_w,L_w)$ is an isomorphism''. 

\subsection{K{\aa}hrstr{\"o}m's conjecture}\label{s3.4}

The following conjecture is due to Johan K{\aa}hrstr{\"o}m, see \cite[Conjecture~1.2]{KMM}.

\begin{conjecture}\label{conj2}
Assume that we are in type $A_{n-1}$, so $W\cong S_n$, the symmetric group.
Let $d\in S_n$ be an involution. Then the following assertions are equivalent:

\begin{enumerate}
\item\label{conj2.1} $\mathbf{K}(d)=\mathtt{true}$.
\item\label{conj2.2} For any $x,y\in W$ such that $x\neq y$ and $\theta_x L_d\neq 0$ and
$\theta_y L_d\neq 0$, we have $\theta_x L_d\not\cong \theta_y L_d$.
\item\label{conj2.3} For any $x,y\in W$ such that $x\neq y$ and $\theta_x L_d\neq 0$ and
$\theta_y L_d\neq 0$, we have $[\theta_x L_d]\neq [\theta_y L_d]$
in $\mathbf{Gr}[\mathcal{O}_0^\mathbb{Z}]$.
\item\label{conj2.4} For any $x,y\in W$ such that $x\neq y$ and $\theta_x L_d\neq 0$ and
$\theta_y L_d\neq 0$, we have $[\theta_x L_d]\neq [\theta_y L_d]$
in $\mathbf{Gr}[\mathcal{O}_0]$.
\end{enumerate} 
\end{conjecture}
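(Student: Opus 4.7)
The chain (\ref{conj2.4}) $\Rightarrow$ (\ref{conj2.3}) $\Rightarrow$ (\ref{conj2.2}) is immediate: isomorphic modules share classes in any Grothendieck group, and the specialization $v \mapsto 1$ maps $\mathbf{Gr}[\mathcal{O}_0^{\mathbb{Z}}]$ onto $\mathbf{Gr}[\mathcal{O}_0]$ while preserving inequalities. The non-trivial content is to close the loop, and the plan is to prove (\ref{conj2.1}) $\Rightarrow$ (\ref{conj2.4}) and (\ref{conj2.2}) $\Rightarrow$ (\ref{conj2.1}) separately.

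For (\ref{conj2.1}) $\Rightarrow$ (\ref{conj2.4}), I would compare characters in the Grothendieck group of Harish-Chandra bimodules annihilated on the right by $\mathrm{Ann}_{U(\mathfrak{g})}(L_d)$. Both $U(\mathfrak{g})/\mathrm{Ann}_{U(\mathfrak{g})}(L_d)$ and $\mathcal{L}(L_d, L_d)$ live here, and, via the Bernstein--Gelfand equivalence identifying such bimodules with the subcategory of $\mathcal{O}_0$ spanned by objects of the form $\theta_x L_d$, Kostant's embedding translates to an inclusion whose two sides have explicit integer-combination expansions in the $[\theta_x L_d]$ in $\mathbf{Gr}[\mathcal{O}_0]$, with coefficients dictated by Kazhdan--Lusztig combinatorics. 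Kostant's problem is positive precisely when these two expansions agree as classes. If (\ref{conj2.4}) failed, two summands on the right would share a class, producing a strict dimension inequality incompatible with (\ref{conj2.1}).

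For (\ref{conj2.2}) $\Rightarrow$ (\ref{conj2.1}), I would invoke the $2$-representation theoretic machinery of \cite[Theorem~B]{KMM}, which reformulates Kostant's problem as a faithfulness statement for the natural action of (a decategorification of) $\cP$ on the additive closure of $\{\theta_x L_d : x \in W,\ \theta_x L_d \neq 0\}$. When all non-zero $\theta_x L_d$ are pairwise non-isomorphic, this additive category decomposes as a direct sum of copies of its distinct indecomposable summands, and the action separates them on sight; this forces the embedding $U(\mathfrak{g})/\mathrm{Ann}_{U(\mathfrak{g})}(L_d) \hookrightarrow \mathcal{L}(L_d, L_d)$ to be surjective, hence an isomorphism.

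The hard part is executing the character comparison in the first implication in full generality. This rests on the precise form of the Bernstein--Gelfand equivalence restricted to the relevant subcategory of Harish-Chandra bimodules and, crucially, on Conjecture~\ref{conj1} to rule out hidden cancellations among the $[\theta_x L_d]$; it is for this reason that the statement is posed in type $A$ only, where Conjecture~\ref{conj1} is expected to hold. Making the ``extra term'' $[\mathcal{L}(L_d, L_d)] - [U(\mathfrak{g})/\mathrm{Ann}_{U(\mathfrak{g})}(L_d)]$ concretely computable in terms of Kazhdan--Lusztig data is what remains elusive in full generality, and likely explains why the statement stands as a conjecture rather than a theorem.
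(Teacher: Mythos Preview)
The statement is a \emph{conjecture}; the paper does not prove it in general and does not claim to. What the paper establishes is Conjecture~\ref{conj2} for \emph{fully commutative} involutions only, and it does so not by proving the four implications abstractly but by an explicit dichotomy: for each fully commutative involution $d$ it shows either that \eqref{conj2.4} holds (Subsection~\ref{s5.3}) or that \eqref{conj2.2} fails (Subsection~\ref{s5.4}), depending on whether $d$ is a product of pairwise distant special elements. The bridge between \eqref{conj2.1} and \eqref{conj2.2} is then supplied by \cite[Theorem~B]{KMM} together with the Brundan--Stroppel result that $\mathbf{KM}(*,d)=\mathtt{true}$ for fully commutative $d$. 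So your outline, which aims at the general conjecture, is not comparable to anything the paper actually proves.

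Within your sketch there is also a concrete misreading. You invoke \cite[Theorem~B]{KMM} for the implication \eqref{conj2.2}$\Rightarrow$\eqref{conj2.1}, but as recorded in Subsection~\ref{s3.5} that theorem gives \eqref{conj2.1} $\Leftrightarrow$ \big(\eqref{conj2.2} $\wedge$ $\mathbf{KM}(*,d)$\big). Thus \eqref{conj2.2} alone does not yield \eqref{conj2.1}; one needs the indecomposability statement $\mathbf{KM}(*,d)$, i.e.\ Conjecture~\ref{conj1} for that $d$. You place the dependence on Conjecture~\ref{conj1} in the step \eqref{conj2.1}$\Rightarrow$\eqref{conj2.4}, but it is in fact the step \eqref{conj2.2}$\Rightarrow$\eqref{conj2.1} that requires it (and indeed \cite[Theorem~B]{KMM} already gives \eqref{conj2.1}$\Rightarrow$\eqref{conj2.2} unconditionally). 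Your proposed route for \eqref{conj2.1}$\Rightarrow$\eqref{conj2.4} via a character comparison in Harish-Chandra bimodules is, as you yourself note, not a proof but a heuristic; nothing in the paper or in \cite{KMM} makes that step go through for general $d$, which is precisely why the statement remains a conjecture.
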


\subsection{Kostant's problem vs K{\aa}hrstr{\"o}m's conjecture
and the indecomposability conjecture}\label{s3.5}

One of the main results of \cite{KMM} is \cite[Theorem~B]{KMM}, which asserts that
Conjecture~\ref{conj2}\eqref{conj2.1} is equivalent to the conjunction of 
Conjecture~\ref{conj2}\eqref{conj2.2} with $\mathbf{KM}(*,d)$.

\subsection{Known results on Kostant's problem}\label{s3.6}

Here is a (possibly incomplete) list of known results on Kostant's problem,
in particular, a  list of the special cases in which 
the answer to Kostant's problem is known.

\begin{itemize}
\item In type $A$, the value $\mathbf{K}(w)$ is constant on Kazhdan--Lusztig left cells,
see \cite[Theorem~61]{MS}.
\item Let $\mathfrak{p}$ be a parabolic subalgebra of $\mathfrak{g}$
containing $\mathfrak{h}\oplus \mathfrak{n}_+$. Let $W^{\mathfrak{p}}$
be the corresponding parabolic subgroup of $W$ and $w_0^\mathfrak{p}$
be the longest element in $W^\mathfrak{p}$. Then $\mathbf{K}(w_0^\mathfrak{p}w_0)=
\mathtt{true}$, see \cite[Theorem~4.4]{GJ} and \cite[Section~7.32]{Ja}.
\item Let $\mathfrak{p}$ be a parabolic subalgebra of $\mathfrak{g}$
and $s\in W^{\mathfrak{p}}$ a simple reflection.
Then $\mathbf{K}(sw_0^\mathfrak{p}w_0)=
\mathtt{true}$, see \cite[Theorem~1]{Ma}.
\item Let $\mathfrak{p}$ be a parabolic subalgebra of $\mathfrak{g}$ and $w\in W^{\mathfrak{p}}$.
Let $\mathfrak{a}$ be the semi-simple part of the Levi quotient of $\mathfrak{p}$. 
Then we have $\mathbf{K}_{\mathfrak{a}}(w)=\mathtt{true}$ if and only if 
$\mathbf{K}_{\mathfrak{g}}(ww_0^\mathfrak{p}w_0)=\mathtt{true}$, see \cite[Theorem~1.1]{Ka}.
\item \cite[Theorem~1]{KaM} gives a module-theoretic characterization of
the statement $\mathbf{K}(w)=\mathtt{true}$.
\item A complete answer to Kostant's problem for $\mathfrak{sl}_n$, where $n=2,3,4,5$,
is given in \cite[Section~4]{KaM}. There one can also find a partial answer
for $\mathfrak{sl}_6$. Some further cases for $\mathfrak{sl}_6$ are
dealt with in \cite[Section~6]{Ka}. A complete answer to Kostant's problem for $\mathfrak{sl}_6$
is given in \cite[Subsection~10.1]{KMM}.
\end{itemize}

\section{Fully commutative permutations and the Temperley-Lieb algebra}\label{s4}

\subsection{Fully commutative permutations}\label{s4.1}

Consider the symmetric group $S_n$ as a Coxeter group in the usual way,
that is, fixing the elementary transpositions $s_i:=(i,i+1)$, for $i=1,2,\dots,n-1$,
as the set of simple reflections. The classical presentation of $S_n$
with respect to these generators has the following relations:
\begin{eqnarray}
s_i^2&=&e;\label{pres1}\\
s_is_j&=&s_js_i,\text{ if } |i-j|\neq 1;\label{pres2}\\
s_is_{i\pm 1}s_i&=&s_{i\pm 1}s_is_{i\pm 1}.\label{pres3}
\end{eqnarray}

The classical result of Matsumoto \cite{Mat} asserts that, for $w\in W$, 
any reduced expression  of $w$ can be transformed into any other 
reduced expression of $w$ by using only the relations in \eqref{pres2} and \eqref{pres3}.
An element $w\in S_n$ is said to be {\em fully commutative} provided that any
reduced expression of $w$ can be transformed into any other reduced expression
of $w$ by only using relation \eqref{pres2}.

The classical Robinson--Schensted correspondence, see \cite{Sc} and \cite[Section~3.1]{Sa},
\begin{displaymath}
\mathbf{RS}:S_n\to \coprod_{\lambda\vdash n}
\mathbf{SYT}_\lambda\times \mathbf{SYT}_\lambda,
\end{displaymath}
assigns to an element of $S_n$ a pair of standard Young tableaux of the same shape
(that shape is a partition of $n$). An element $w\in S_n$ is fully commutative
provided that the shape of the tableaux in $\mathbf{RS}(w)$ is a partition with
at most two parts. Alternatively, an element $w$ is fully commutative provided that
it avoids the $(3,2,1)$-pattern, that is, the longest decreasing subsequence for $w$
has length at most two. We refer to \cite{Fa95,Fa96} for details.

\subsection{Temperley-Lieb algebra}\label{s4.2}

Let $\mathbb{A}$ be a commutative ring and $\delta\in\mathbb{A}$. For $n\geq 1$,
the corresponding Temperley-Lieb algebra $\mathbf{TL}_n(\mathbb{A},\delta)$
is an $\mathbb{A}$-algebra which is free as an $\mathbb{A}$-module with a
basis consisting of all planar (i.e. non-crossing) pairings of $2n$ points
in a plane. Composition is defined via concatenation of diagrams,
straightening the outcome to a new diagram and, finally, multiplication 
with $\delta^k$, where $k$ is the number of closed loops removed
during the straightening procedure. We refer to \cite[Chapter~6]{Mar} for details.
Here is an example:
\begin{displaymath}
\xymatrix@R=3mm@C=3mm{
\bullet\ar@{-}@/_7pt/[r]&\bullet&\bullet\ar@{-}[drr]&\bullet\ar@{-}@/_7pt/[r]&
\bullet&\bullet\ar@{-}@/_5pt/[r]&\bullet&&&&&&
&&&&&&\\
\bullet\ar@{-}@/^7pt/[r]\ar@{.}[dd]&\bullet\ar@{.}[dd]&\bullet\ar@{-}@/^7pt/[r]\ar@{.}[dd]
&\bullet\ar@{.}[dd]&\bullet\ar@{.}[dd]&\bullet\ar@{-}@/^7pt/[r]\ar@{.}[dd]&\bullet\ar@{.}[dd]&&&&&&
\bullet\ar@{-}@/_7pt/[r]&\bullet&\bullet\ar@{-}[ddrr]&\bullet\ar@{-}@/_7pt/[r]&
\bullet&\bullet\ar@{-}@/_5pt/[r]&\bullet\\
&&&\cdot &&&&&&&=&\delta^2\cdot &
&&&&&&\\
\bullet\ar@{-}@/_7pt/[r]&\bullet&\bullet\ar@{-}[drr]&\bullet\ar@{-}@/_7pt/[r]&
\bullet&\bullet\ar@{-}@/_5pt/[r]&\bullet&&&&&&
\bullet\ar@{-}@/^7pt/[r]&\bullet&\bullet\ar@{-}@/^7pt/[r]
&\bullet&\bullet&\bullet\ar@{-}@/^7pt/[r]&\bullet\\
\bullet\ar@{-}@/^7pt/[r]&\bullet&\bullet\ar@{-}@/^7pt/[r]
&\bullet&\bullet&\bullet\ar@{-}@/^7pt/[r]&\bullet&&&&&&
&&&&&&\\
}
\end{displaymath}

For $\mathbb{A}=\mathbb{Z}[v,v^{-1}]$ and $\delta=v+v^{-1}$, 
one can give the following alternative description of $\mathbf{TL}_n(\mathbb{A},\delta)$.
It is the quotient of the Hecke algebra $\mathbf{H}_n$ for $S_n$ modulo
the ideal generated by all $\underline{H}_w$, where $w$ is not   
fully commutative. We note that the latter ideal is just 
the $\mathbb{Z}[v,v^{-1}]$-span of all such $\underline{H}_w$. 
By construction, $\mathbf{TL}_n(\mathbb{Z}[v,v^{-1}],v+v^{-1})$
has a basis given by all $\underline{H}_w$, for $w$  fully
commutative. Setting $\mathtt{e}_i:=\underline{H}_{s_i}$, for $i=1,2,\dots,n-1$, we have
the following presentation for $\mathbf{TL}_n(\mathbb{Z}[v,v^{-1}],v+v^{-1})$:
\begin{eqnarray}
\mathtt{e}_i^2&=&{(v+v^{-1})}\mathtt{e}_i;\label{tlpres1}\\
\mathtt{e}_i\mathtt{e}_j&=&\mathtt{e}_j\mathtt{e}_i,\text{ if } |i-j|\neq 1;\label{tlpres2}\\
\mathtt{e}_i\mathtt{e}_{i\pm 1}\mathtt{e}_i&=&\mathtt{e}_i.\label{tlpres3}
\end{eqnarray}
Here $\mathtt{e}_i$ corresponds to the diagram
\begin{displaymath}
\xymatrix@R=3mm@C=3mm{
\bullet\ar@{-}[d]&\bullet\ar@{-}[d]&\dots &\bullet\ar@{-}[d]
&\bullet\ar@{-}@/_7pt/[r]&\bullet&\bullet\ar@{-}[d]&\dots &\bullet\ar@{-}[d]&\bullet\ar@{-}[d]\\
\bullet&\bullet&\dots &\bullet&\bullet\ar@{-}@/^7pt/[r]&\bullet&\bullet&\dots &\bullet&\bullet
}
\end{displaymath}
where the horizontally paired points are $i$ and $i+1$.

For a fully commutative element $w\in W$, we denote by $\mathtt{e}_w$
the image of $\underline{H}_w$ in $\mathbf{TL}_n(\mathbb{Z}[v,v^{-1}],v+v^{-1})$.
Taking any reduced expression $w=s_1s_2\dots s_k$, we have
$\mathtt{e}_w=\mathtt{e}_{s_1}\mathtt{e}_{s_2}\dots \mathtt{e}_{s_k}$,
see \cite{Fa95}.

The number of cups (or caps) in $\mathtt{e}_w$ coincides with the value
of Lusztig's $\mathbf{a}$-function on $w$. Also, the cups and caps determine
the corresponding Kazhdan--Lusztig right and left pre-orders, respectively.
For example, $x\leq_L y$ if and only each cap of $\mathtt{e}_x$
is a cap of $\mathtt{e}_y$. {For details, see~\cite[Section 3]{FG97}.}

\begin{remark}\label{rem4.2-1}
\emph{
Let $x,y,z\in S_n$ be fully commutative elements such that 
$\mathtt{e}_x\mathtt{e}_y=f(v)\mathtt{e}_z$, for some $f(v)\in\mathbb{Z}[v,v^{-1}]$. 
Then the number of cups (resp. caps) in $\mathtt{e}_z$
is greater than or equal to the maximum of 
the numbers of cups (resp. caps) in $\mathtt{e}_x$ and $\mathtt{e}_y$.
}
\end{remark}

\begin{remark}\label{rem4.2-2}
\emph{
Let $x\in S_n$ be a fully commutative element. Then $\mathtt{e}_{x^{-1}}$ 
is obtained from $\mathtt{e}_x$ by reflection in a horizontal line.
}
\end{remark}

\subsection{Results of Brundan and Stroppel}\label{s4.3}

Given two arbitrary fully commutative elements $x,y\in S_n$, 
the corresponding module $\theta_xL_y$ in {the category 
$\mathcal{O}$} for $\mathfrak{sl}_n$ is either indecomposable or zero.
If $y$ belongs to a Kazhdan--Lusztig right
cell containing an element of the form $w_0^{\mathfrak{p}}w_0$,
for some parabolic $\mathfrak{p}$, this follows by
combining \cite[Theorem~4.11]{BS} and \cite[Theorem~1.1]{BS2}.
{ Indeed, \cite[Theorem~1.1]{BS2} asserts, among other things, that,
in type $A$, the principal block of the parabolic category $\mathcal{O}$ 
corresponding to a maximal parabolic subalgebra is equivalent to 
the module category of a certain diagrammatic algebra studied in 
\cite{BS}. For the latter algebra, the claim of 
\cite[Theorem~4.11(iii)]{BS} translates exactly into the statement 
that $\theta_xL_y$ is either zero or has simple top.}
Using \cite[Proposition~35]{MS}, we can remove this assumption on the right cell of $y$.
In particular, this implies that $\mathbf{KM}(*,w)=\mathtt{true}$ for
any fully commutative $w\in S_n$.

Note that, by \cite[Theorem~5.1]{Ge}, the two-sided order on $S_n$ is given by the 
dominance order on partitions. Consequently, if $x\in S_n$ is not 
fully commutative while $y\in S_n$ is, then $\theta_xL_y=0$ by
\cite[Lemma~12]{MM1}. Combined with \cite[Theorem~B]{KMM}, we
obtain equivalence of assertions \eqref{conj2.1} and \eqref{conj2.2}
in Conjecture~\ref{conj2} in the case of fully commutative 
elements $x,y\in S_n$.

%
%
%
%

\section{Main result}\label{s5}

\subsection{Special fully commutative involutions}\label{s5.1}

For $i\in\{1,2,\dots,n-1\}$, set $\sigma_{i,0}:=s_i$.
Also, for  $j\in\{1,\dots,\min(i-1,n-1-i)\}$, consider the
following element of $S_n$:
\begin{displaymath}
\sigma_{i,j}=
s_i(s_{i-1}s_{i+1})(s_{i-2}s_is_{i+2})\cdots
(s_{i-j}s_{i-j+2}\cdots s_{i+j})\cdots(s_{i-2}s_is_{i+2})(s_{i-1}s_{i+1})s_i.
\end{displaymath}
As a permutation, the element $\sigma_{i,j}$ has the following diagram:

\resizebox{\textwidth}{!}{
$
\xymatrix@R=20mm@C=12mm{
1\ar@{-}[d]&\dots&i-j-1\ar@{-}[d]&i-j\ar@{-}[drrrr]&i-j+1\ar@{-}[drrrr]&\dots
&i\ar@{-}[drrrr]&i+1\ar@{-}[dllll]&i+2\ar@{-}[dllll]&\dots&i+j+1\ar@{-}[dllll]&
i+j+2\ar@{-}[d]&\dots&n\ar@{-}[d]\\
1&\dots&i-j-1&i-j&i-j+1&\dots&i&i+1&i+2&\dots&i+j+1&i+j+2&\dots&n
}
$
}

It is readily seen that this element is $(3,2,1)$-avoiding and hence
fully commutative (cf. \cite[Theorem~2.1]{BJS}). The corresponding Temperley-Lieb diagram is:

\resizebox{\textwidth}{!}{
$
\xymatrix@R=25mm@C=12mm{
1\ar@{-}[d]&\dots&i-j-1\ar@{-}[d]&i-j\ar@/_3pc/@{-}[rrrrrrr]&i-j+1\ar@/_2pc/@{-}[rrrrr]&\dots
&i\ar@/_1pc/@{-}[r]&i+1&\dots&i+j-1&i+j+1&
i+j+2\ar@{-}[d]&\dots&n\ar@{-}[d]\\
1&\dots&i-j-1&i-j\ar@/^3pc/@{-}[rrrrrrr]&i-j+1\ar@/^2pc/@{-}[rrrrr]
&\dots&i\ar@/^1pc/@{-}[r]&i+1&\dots&i+j-1&i+j+1&i+j+2&\dots&n
}
$
}

We will call the set $\{i-j,i-j+1,\dots,i+j+1\}$ the {\em support} of $\sigma_{i,j}$
and the set $\{i-j-1,i-j,i-j+1,\dots,i+j+1,i+j+2\}$ the {\em extended 
support} of $\sigma_{i,j}$. Elements of the form $\sigma_{i,j}$ will be called special.

We will say that $\sigma_{i,j}$ and $\sigma_{i',j'}$ 
are {\em distant} provided that their extended supports intersect in at most one element.
For example, $s_1=\sigma_{1,0}$ and  $s_4=\sigma_{4,0}$ are distant
since $\{0,1,2,3\}\cap\{3,4,5,6\}=\{3\}$, while 
$s_1=\sigma_{1,0}$ and  $s_3=\sigma_{3,0}$ are not distant
since $\{0,1,2,3\}\cap\{2,3,4,5\}=\{2,3\}$ is neither empty nor a singleton. 
We note that, in the  latter example, the supports of $\sigma_{1,0}$ and 
$\sigma_{3,0}$ do not have common elements.

\subsection{Formulation of the main result}\label{s5.2}

\begin{theorem}\label{thm5.2.1}
Conjecture~\ref{conj2} is true for all fully commutative involutions
in $S_n$. Moreover, if $d\in S_n$ is a fully commutative involution,
then $\mathbf{K}(d)=\mathtt{true}$ if and only if $d$ is a product
of pairwise distant special elements.
\end{theorem}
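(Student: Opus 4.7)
The plan is to combine Brundan--Stroppel's indecomposability theorem, the K{\aa}hrstr{\"o}m--KMM reformulation of Kostant's problem, and an explicit diagrammatic computation inside the Temperley-Lieb algebra. By the discussion at the end of Section~\ref{s4.3}, for any fully commutative $d$ we have $\mathbf{KM}(*,d)=\mathtt{true}$, so \cite[Theorem~B]{KMM} identifies $\mathbf{K}(d)=\mathtt{true}$ with the non-isomorphism condition \eqref{conj2.2}. Moreover, each $\theta_x L_d$ is indecomposable and comes equipped with a standard graded lift, while the action of $\cP^{\mathbb{Z}}$ on the Serre subcategory of $\mathcal{O}_0^{\mathbb{Z}}$ generated by fully commutative simples decategorifies to an explicit right action of $\mathbf{TL}_n$. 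These two ingredients together reduce all four assertions of Conjecture~\ref{conj2}, for such $d$, to one and the same combinatorial statement: injectivity, in $\mathbf{TL}_n$, of the map $x\longmapsto \mathtt{e}_x\cdot [L_d]$ on the set of $x$ where the product is nonzero.

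For the Kostant-positive direction, assume $d=\sigma_{i_1,j_1}\cdots\sigma_{i_k,j_k}$ is a product of pairwise distant specials. I would first handle $k=1$: for a single $\sigma_{i,j}$, the diagram of $\mathtt{e}_d$ consists of $j+1$ nested cups and matching caps supported on $\{i-j,\dots,i+j+1\}$, and an explicit check---using Remark~\ref{rem4.2-1} to control the cup/cap count of the output---shows that $x$ can be recovered from $\mathtt{e}_x\cdot[L_d]$ by reading off its through-strand pattern. For general $k$, the distant hypothesis guarantees that the successive special factors of $\mathtt{e}_d$ are separated by at least one through-strand, so $\mathtt{e}_d$ factors as a horizontal juxtaposition of the individual special diagrams; any left multiplier $\mathtt{e}_x$ respects this block decomposition, and the single-block recovery can then be applied block-by-block to conclude $x=y$.

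For the Kostant-negative direction, suppose two special factors $\sigma_{i,j}$ and $\sigma_{i',j'}$ of $d$ have extended supports that share at least two elements. The TL relations \eqref{tlpres1}--\eqref{tlpres3} then allow the cups near the overlapping region to be ``swallowed'' under left multiplication in more than one way, and my plan is to exhibit, for each essentially different such configuration, a distinct pair of fully commutative elements $x\neq y$ satisfying $\mathtt{e}_x\cdot [L_d]=\mathtt{e}_y\cdot [L_d]$; these local collisions then extend to $d$ globally via the block decomposition from the previous paragraph, since the remaining, distant factors are untouched. This negative direction is where the main obstacle sits: enumerating the essentially different ways two specials can fail to be distant (adjacent extended supports, overlapping supports of different sizes, one extended support straddling the boundary of another, and so on) and verifying each by a direct TL computation is, as the introduction warns, a rather technical case-by-case analysis. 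The positive direction is by comparison structural, relying only on the clean block separation afforded by distantness.
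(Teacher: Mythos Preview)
Your opening reduction hides a nontrivial step. From $\mathbf{KM}(*,d)=\mathtt{true}$ and \cite[Theorem~B]{KMM} you get \eqref{conj2.1}$\Leftrightarrow$\eqref{conj2.2}, and trivially \eqref{conj2.4}$\Rightarrow$\eqref{conj2.3}$\Rightarrow$\eqref{conj2.2}; but collapsing all four conditions to one combinatorial statement also requires that equality of graded characters force $\theta_xL_d\cong\theta_yL_d$. Indecomposability and the existence of a graded lift do not give this on their own. The paper does not attempt such a reduction: for the negative direction it proves the isomorphism directly, showing via Lemma~\ref{lem5.4.1} and Corollary~\ref{cor5.4.2} that both modules are graded self-dual with simple top in the same degree $-a$, and then producing a nonzero degree-zero morphism by adjunction and a circle count in $\mathtt{e}_{x^{-1}}\mathtt{e}_y$.

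The more serious problem is in your positive direction. The claim that ``any left multiplier $\mathtt{e}_x$ respects this block decomposition'' is false. Lemma~\ref{l5.25-1} constrains the \emph{caps} of $\mathtt{e}_x$ to lie among those of $\mathtt{e}_d$, hence inside the individual special blocks, but the \emph{cups} of $\mathtt{e}_x$ are entirely unconstrained and may join points across different blocks. For instance, with $n=6$ and $d=s_1s_4$ (distant specials supported on $\{1,2\}$ and $\{4,5\}$), the element $x$ with single cap $(1,2)$, single cup $(5,6)$, and four slanted through-strands satisfies $\theta_xL_d\neq 0$ yet does not split along the blocks of $d$. So ``single-block recovery applied block-by-block'' cannot work as stated. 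The paper's positive argument is completely different: given $x\neq y$ it exhibits test elements $u,v$ (usually $u=x^{-1}$ and $v=d$, in one case a modified $d'$) such that the multiplicity of $\theta_d$ in $\theta_v\theta_u\theta_x$ differs from that in $\theta_v\theta_u\theta_y$, and this is carried out by a four-case Temperley--Lieb analysis in Subsection~\ref{s5.3}.

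You have also inverted where the difficulty lies. The paper's negative direction needs no enumeration of overlap configurations: it gives a single uniform construction (choose adjacent non-nested caps $A,B$ in $\mathtt{e}_d$, then form $x$ and $y$ by deleting $B$ respectively $A$, together with the cup $B'$ and all enclosing cap/cup pairs, replacing them by through-strands). The genuine case-by-case work is in the positive direction.
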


In other words, for a fully commutative involution $d$, we have
$\mathbf{K}(d)=\mathtt{true}$ if and only if any two  
non-nested cups (or caps) in the Temperley-Lieb diagram of $d$
are separated by at least one vertical line. We will call such fully commutative
involutions {\em Kostant involutions}.

\begin{corollary}\label{cor5.2.1-2}
If $w\in S_n$ is a fully commutative element, then
$\mathbf{K}(w)=\mathtt{true}$ if and only if any two caps in 
$\mathtt{e}_w$ are either nested or separated by at least one propagating line.
\end{corollary}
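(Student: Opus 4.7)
The plan is to reduce Corollary~\ref{cor5.2.1-2} to Theorem~\ref{thm5.2.1} via the invariance of $\mathbf{K}$ on Kazhdan--Lusztig left cells in type $A$, recalled from \cite[Theorem~61]{MS} in Subsection~\ref{s3.6}. Fully commutative elements form a union of two-sided cells, so the left cell $\mathcal{C}$ of any fully commutative $w\in S_n$ consists entirely of fully commutative elements. By the description of the left pre-order on fully commutative elements in Subsection~\ref{s4.2} in terms of caps of Temperley-Lieb diagrams, $\mathcal{C}$ is precisely the set of fully commutative elements whose diagram has the same set of caps as $\mathtt{e}_w$. Every left cell contains a Duflo involution, and in type $A$ every involution is a Duflo involution (Subsection~\ref{s2.6}), so $\mathcal{C}$ contains a unique involution $d$, and $\mathbf{K}(w)=\mathbf{K}(d)$.

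Next I would identify $d$ diagrammatically. By Remark~\ref{rem4.2-2}, $\mathtt{e}_d$ is symmetric under reflection in a horizontal line; in particular, every through-strand of $\mathtt{e}_d$ is a vertical line, and each cap at the bottom is matched by its mirror cup at the top. Since $d$ and $w$ share the same caps, $\mathtt{e}_d$ is obtained from $\mathtt{e}_w$ by reflecting the caps upward to produce the top cups and replacing the (possibly tilted) propagating strands of $\mathtt{e}_w$ by straight vertical lines at the same bottom positions. In particular, a bottom point of $\mathtt{e}_w$ lies on a propagating strand if and only if the corresponding point in $\mathtt{e}_d$ lies on a vertical line.

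Finally, I would apply Theorem~\ref{thm5.2.1} and translate. By Theorem~\ref{thm5.2.1} and the remark following it, $\mathbf{K}(d)=\mathtt{true}$ if and only if any two non-nested caps in $\mathtt{e}_d$ are separated by at least one vertical line. Under the correspondence above, the caps of $\mathtt{e}_d$ coincide with the caps of $\mathtt{e}_w$, and the positions of the vertical lines in $\mathtt{e}_d$ are exactly the bottom endpoints of the propagating strands of $\mathtt{e}_w$. Hence the separation condition for $\mathtt{e}_d$ translates verbatim into: any two non-nested caps of $\mathtt{e}_w$ are separated by at least one propagating line, which is the assertion of the corollary. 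There is essentially no obstacle beyond a careful bookkeeping step; the only thing that must be verified is that the bottom positions of $\mathtt{e}_w$ not covered by caps are precisely the positions carrying vertical strands in $\mathtt{e}_d$, which is immediate from the cap-based characterization of the left cell of $w$ and the horizontal-reflection symmetry of involution diagrams.
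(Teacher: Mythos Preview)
Your proof is correct and follows essentially the same route as the paper: reduce to the Duflo involution $d$ in the left cell of $w$ via \cite[Theorem~61]{MS}, use the cap description of left cells from Subsection~\ref{s4.2} together with the existence of a unique (Duflo) involution in each left cell from Subsection~\ref{s2.6}, and then translate the criterion of Theorem~\ref{thm5.2.1} for $d$ into the stated condition on the caps and propagating lines of $\mathtt{e}_w$. Your write-up is simply more explicit about the diagrammatic identification of $\mathtt{e}_d$ and the translation step than the paper's one-line list of references.
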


\begin{proof}
This follows from Theorem~\ref{thm5.2.1}, the first bullet in
Subsection~\ref{s3.6}, the paragraph before Remark~\ref{rem4.2-1}, 
{the last paragraph of Section \ref{s2.6}, 
as well as the fact that Lusztig's $\mathbf{a}$-function is constant on left cells.}
\end{proof}

The two directions of Theorem~\ref{thm5.2.1} will be proved in 
Subsections~\ref{s5.3} and \ref{s5.4}.

\subsection{Auxiliary lemmata}\label{s5.25}

\begin{lemma}\label{l5.25-1}
Let $x$ and $y$ be two fully commutative elements in $S_n$.
Then $\theta_x L_y\neq 0$ is equivalent to the condition
that { each cup of $\mathtt{e}_{x^{-1}}$ is a cup of  $\mathtt{e}_y$.}
\end{lemma}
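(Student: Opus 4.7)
The plan is to translate the cup-containment condition into Kazhdan--Lusztig language and then establish the equivalence $\theta_xL_y\neq 0\iff x^{-1}\leq_R y$ for fully commutative $x,y\in S_n$ by reducing to a cell-module computation in the Temperley--Lieb algebra, combined with the Brundan--Stroppel diagrammatic description from Subsection~\ref{s4.3}.

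By Remark~\ref{rem4.2-2}, the cups of $\mathtt{e}_{x^{-1}}$ coincide, as sets of column pairs, with the caps of $\mathtt{e}_x$, and by the cup/preorder dictionary at the end of Subsection~\ref{s4.2}, the condition in the lemma is precisely $x^{-1}\leq_R y$. Since $\theta_xL_y$ is indecomposable or zero by Subsection~\ref{s4.3}, the non-vanishing is equivalent to $[\theta_xL_y]=[L_y]\cdot\underline{H}_x\neq 0$ in the right regular $\mathbf{H}$-module structure on $\mathbf{Gr}[\mathcal{O}_0^{\mathbb{Z}}]$ (Subsection~\ref{s2.6}). Because all composition factors of $\theta_xL_y$ are supported on fully commutative elements by Subsection~\ref{s4.3}, this computation descends to the Temperley--Lieb quotient of $\mathbf{H}$, so the problem becomes to determine when $\mathtt{e}_x$ acts non-trivially on the image of $[L_y]$.

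The Temperley--Lieb module structure induced on the fully commutative part of the Grothendieck group decomposes as a direct sum of cell modules indexed by two-row partitions; in each, basis elements correspond to cup diagrams (equivalently, to the bottom cup patterns of $\mathtt{e}_w$ for $w$ running over a fixed right cell). Under this identification, the class of $[L_y]$ becomes, up to sign, the basis vector carrying the cup pattern of $\mathtt{e}_y$, and right multiplication by $\mathtt{e}_x$ is given by an explicit stacking rule: the result vanishes exactly when stacking the diagram of $\mathtt{e}_x$ against the basis vector creates a short cup--cap collision, which happens iff some cup of $\mathtt{e}_x$, and hence by Remark~\ref{rem4.2-2} of $\mathtt{e}_{x^{-1}}$, is not already a cup of $\mathtt{e}_y$. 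This is the desired criterion.

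The main obstacle will be pinning down the precise identification of $[L_y]$ with the correct cell-module basis vector, and matching the stacking rule with the cup-containment criterion taking into account the horizontal reflection that swaps $\mathtt{e}_x$ and $\mathtt{e}_{x^{-1}}$. Both tasks are standard within the Brundan--Stroppel framework recalled in Subsection~\ref{s4.3}, so I would appeal to their diagrammatic calculus directly rather than redo the underlying cellular representation theory. As a sanity check, one can verify the equivalence in the base case $x=s$: the condition reduces to the cup of $\mathtt{e}_s$ being a cup of $\mathtt{e}_y$, i.e.\ $ys<y$, which is the well-known criterion for $\theta_sL_y\neq 0$.
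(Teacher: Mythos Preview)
Your route is genuinely different from the paper's, and considerably more involved. The paper's proof is essentially one line: by \cite[Lemma~12]{MM1} (a general fact about fiab bicategories), $\theta_x L_y\neq 0$ is equivalent to a Kazhdan--Lusztig cell inequality between $x^{-1}$ and $y$, and the cup/cap dictionary recalled just before Remark~\ref{rem4.2-1} then translates that inequality verbatim into the diagrammatic condition. No passage to the Grothendieck group, no Brundan--Stroppel calculus, and no cell-module computation is needed.

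Beyond being longer, your argument has a concrete gap in the last step. You assert that the cell-module vanishing criterion is that ``some cup of $\mathtt{e}_x$, and hence by Remark~\ref{rem4.2-2} of $\mathtt{e}_{x^{-1}}$, is not already a cup of $\mathtt{e}_y$''. But Remark~\ref{rem4.2-2} says that $\mathtt{e}_{x^{-1}}$ is obtained from $\mathtt{e}_x$ by horizontal reflection, which turns cups into \emph{caps}; a cup of $\mathtt{e}_x$ is therefore a cap of $\mathtt{e}_{x^{-1}}$, not a cup, and the inference fails. Relatedly, in right multiplication $[L_y]\cdot\mathtt{e}_x$ the top row (the cups) of $\mathtt{e}_x$ meets the \emph{bottom} row of the diagram representing $[L_y]$, so the stacking rule you quote does not compare cups of $\mathtt{e}_x$ with cups of $\mathtt{e}_y$ at all. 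To salvage your approach you would need to track carefully which half-diagram carries $[L_y]$, on which side $\mathtt{e}_x$ is stacked, and exactly how the through-strand count governs vanishing in the cell quotient; this is doable but delicate, and entirely avoided by citing \cite[Lemma~12]{MM1}. Your sanity check for $x=s$ is fine, but it does not detect the cup/cap swap because $\mathtt{e}_s=\mathtt{e}_{s^{-1}}$.
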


\begin{proof}
{ By \cite[Lemma~12]{MM1}, the condition $\theta_x L_y\ne 0$ is equivalent to 
the condition $x^{-1} \leq_L y$. Note that $(\theta_x)^* \cong \theta_{x^{-1}}$
and that the diagram for $x^{-1}$ is the flip of the diagram for $x$. 
As recalled in the text above Remark~\ref{rem4.2-1}, the condition $x^{-1} \leq_L y$ holds if and only if the cups of $e_{x^{-1}}$ form a subset of those of $e_y$. By Remark~\ref{rem4.2-1}, this implies the result.
}
\end{proof}

\begin{lemma}\label{l5.25-2}
Let $x,y,z$ be three fully commutative elements in $S_n$ such that
\begin{displaymath}
\mathtt{e}_x\mathtt{e}_y=f(v)\mathtt{e}_z,\qquad
\text{ for some } f(v)\in{\mathbb{Z}[v,v^{-1}]}. 
\end{displaymath}
Then $\theta_x \theta_y $ is isomorphic to $\theta_z^{f(1)}\oplus \theta$,
where $\theta L_w=0$ for any fully commutative element $w\in S_n$.
\end{lemma}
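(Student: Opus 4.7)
The plan is to decategorify, read off coefficients via the hypothesis, and then recategorify using positivity together with the Krull--Schmidt property of $\cP^{\mathbb{Z}}$. By the identification $\mathbf{Gr}_\oplus[\cP^{\mathbb{Z}}]\cong \mathbf{H}$ from Subsection~\ref{s2.6}, the composition $\theta_x\theta_y$ corresponds to the product $\underline{H}_x\underline{H}_y$, which we expand in the Kazhdan--Lusztig basis as
\begin{displaymath}
\underline{H}_x\underline{H}_y \;=\; \sum_{w\in W} c^{w}_{x,y}\,\underline{H}_w,
\qquad c^{w}_{x,y}\in\mathbb{Z}_{\geq 0}[v,v^{-1}],
\end{displaymath}
where non-negativity of the coefficients is the positivity property of the KL basis.

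First, I would use that $\cP^{\mathbb{Z}}$ is a Krull--Schmidt category whose indecomposable objects are, up to grading shift, precisely the $\theta_w$ for $w\in W$. Since their classes form a $\mathbb{Z}[v,v^{-1}]$-basis of the split Grothendieck group, the above decomposition in $\mathbf{H}$ lifts to an honest direct sum decomposition
\begin{displaymath}
\theta_x\theta_y \;\cong\; \bigoplus_{w\in W}\bigoplus_{k\in\mathbb{Z}} \theta_w\langle k\rangle^{\oplus [c^{w}_{x,y}]_k},
\end{displaymath}
where $[\,\cdot\,]_k$ extracts the coefficient of $v^k$. Forgetting the grading, the multiplicity of $\theta_w$ in $\theta_x\theta_y$ becomes $c^{w}_{x,y}(1)$.

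Next, I would project to the Temperley--Lieb quotient. By the description of $\mathbf{TL}_n(\mathbb{Z}[v,v^{-1}],v+v^{-1})$ recalled in Subsection~\ref{s4.2}, the image of $\underline{H}_w$ is $\mathtt{e}_w$ when $w$ is fully commutative and zero otherwise, so
\begin{displaymath}
\mathtt{e}_x\mathtt{e}_y \;=\; \sum_{w\text{ fully commutative}} c^{w}_{x,y}\,\mathtt{e}_w.
\end{displaymath}
Comparing with the hypothesis $\mathtt{e}_x\mathtt{e}_y = f(v)\mathtt{e}_z$ and using that the $\mathtt{e}_w$ for $w$ fully commutative are $\mathbb{Z}[v,v^{-1}]$-linearly independent, we read off $c^{z}_{x,y}=f(v)$ and $c^{w}_{x,y}=0$ for every fully commutative $w\neq z$. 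Setting
\begin{displaymath}
\theta \;:=\; \bigoplus_{w\text{ not fully commutative}} \theta_w^{\oplus c^{w}_{x,y}(1)},
\end{displaymath}
we therefore obtain $\theta_x\theta_y\cong \theta_z^{f(1)}\oplus \theta$ as ungraded functors.

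Finally, it remains to check that $\theta L_w=0$ for every fully commutative $w$, but this is immediate from the discussion at the end of Subsection~\ref{s4.3}: if $u$ is not fully commutative and $w$ is, then $u\not\geq_J w$ by Geck's description of the two-sided order via the dominance order, so $\theta_uL_w=0$ by \cite[Lemma~12]{MM1}. This finishes the argument. No step is genuinely hard; the only point that requires care is ensuring that KL-positivity together with Krull--Schmidt is enough to promote the Grothendieck-group identity to an actual direct sum decomposition, which is the standard categorification argument.
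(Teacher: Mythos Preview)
Your argument is correct and follows essentially the same route as the paper's proof: decategorify via $\mathbf{Gr}_\oplus[\cP^{\mathbb{Z}}]\cong\mathbf{H}$, pass to the Temperley--Lieb quotient to identify the coefficients attached to fully commutative $w$, and collect the remaining (non-fully-commutative) summands into $\theta$, which kills $L_w$ for $w$ fully commutative by \cite[Lemma~12]{MM1}. The paper's proof is little more than a sketch of exactly this; your version makes explicit the role of KL-positivity and Krull--Schmidt in lifting the Grothendieck-group identity to a genuine direct sum decomposition, which the paper leaves implicit.
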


\begin{proof}
This follows from the realization of the Temperley-Lieb algebra as a
quotient of the Hecke algebra and the action of the latter on $\mathcal{O}_0$
via projective functors. { Recall from Section~\ref{s2.6} that, 
for $w \in \{x,y,z\}$, the Grothendieck class 
$[\theta_w]\in \mathbf{Gr}_\oplus[\cP^\mathbb{Z}]$ corresponds to the Kazhdan-Lusztig basis element 
$\underline{H}_w\in \mathbf{H}$, which, in turn, is mapped to the Temperley-Lieb diagram 
$e_w \in \mathbf{TL}_n(\mathbb{Z}[v,v^{-1}],v + v^{-1})$. Therefore, the 
combinatorics of the $\theta_w$ corresponds precisely to the combinatorics of the $e_w$ 
(after putting $v=1$). All this is, of course, up to higher order Kazhdan-Lusztig basis 
elements. These latter elements correspond to the additional summand $\theta$ in the formulation
and are killed in the Temperley-Lieb  quotient. In particular, we also have  $\theta L_w=0$,
for all fully commutative $w$.}
\end{proof}

\subsection{Positive answer}\label{s5.3}

Let $d$ be a fully commutative involution which is a product of 
pairwise distant special elements. We are going to prove that $d$
has the property described in Conjecture~\ref{conj2}\eqref{conj2.4}.

Recall that, for fully commutative elements, assertions \eqref{conj2.1}
and \eqref{conj2.2} in Conjecture~\ref{conj2} are equivalent. We also have the
obvious implications \eqref{conj2.4}$\Rightarrow$\eqref{conj2.3}$\Rightarrow$\eqref{conj2.2}.
Hence $\mathbf{K}(d)=\mathtt{true}$ for any fully commutative involution $d$ that
is a product of  pairwise distant special elements.

Let $x,y\in  S_n$ be two different elements
such that $\theta_x L_d\neq 0$ and $\theta_y L_d\neq 0$. {In particular, 
this implies that $x$ and $y$ are fully commutative, see Section~\ref{s4.3}. }
To prove that $d$ has the property described in 
Conjecture~\ref{conj2}\eqref{conj2.4}, it is enough to find some
$u,v\in S_n$ such that $\dim \mathrm{Hom}_{\mathfrak{g}}(\theta_uP_v,\theta_x L_d)\neq 
\dim \mathrm{Hom}_{\mathfrak{g}}(\theta_u P_v,\theta_y L_d)$. By adjunction,
we have
\begin{displaymath}
\begin{array}{rcl}
\mathrm{Hom}_{\mathfrak{g}}(\theta_uP_v,\theta_x L_d)&{\cong}&
\mathrm{Hom}_{\mathfrak{g}}(\theta_{x^{-1}} \theta_uP_v,L_d),\\
\mathrm{Hom}_{\mathfrak{g}}(\theta_uP_v,\theta_y L_d)&{\cong}&
\mathrm{Hom}_{\mathfrak{g}}(\theta_{y^{-1}} \theta_uP_v,L_d).
\end{array}
\end{displaymath}
Note that, for a projective module $P$, the dimension of
$\mathrm{Hom}_{\mathfrak{g}}(P,L_d)$ equals the multiplicity of
$P_d$ as a direct summand of $P$. Therefore, we need to 
find $u$ and $v$ such that the multiplicity of $\theta_d$ as a summand of 
$\theta_{x^{-1}}\theta_u\theta_v$ is different from the multiplicity of 
$\theta_d$ as a summand of $\theta_{y^{-1}}\theta_u\theta_v$.

Since $\theta_d$ is self-adjoint and both $u$ and $v$ are arbitrary, we can 
reformulate this as follows: find $u$ and $v$ such that the multiplicity 
of $\theta_d$ as a summand of $\theta_v\theta_{u}\theta_x$ differs
from the multiplicity of $\theta_d$ as a summand of $\theta_v\theta_{u}\theta_y$.

{
For future use, we record the following technical lemma.

\begin{lemma}\label{multi}
In the notation from above, the multiplicity of $\theta_d$ as a summand of 
$\theta_d\theta_{x^{-1}}\theta_x$ equals $2^{2a}$.
\end{lemma}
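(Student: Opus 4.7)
The plan is to translate the multiplicity question into a calculation in the Temperley-Lieb algebra via Lemma~\ref{l5.25-2}, and then to perform that calculation diagrammatically. Specifically, Lemma~\ref{l5.25-2} identifies the multiplicity of $\theta_d$ as a summand of $\theta_d\theta_{x^{-1}}\theta_x$ with the coefficient of $\mathtt{e}_d$ in $\mathtt{e}_d\mathtt{e}_{x^{-1}}\mathtt{e}_x$, evaluated at $v=1$; the extra summand $\theta$ in the statement of Lemma~\ref{l5.25-2} annihilates every $L_w$ for fully commutative $w$, and so contributes nothing to the multiplicity of $\theta_d$.

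First I would compute $\mathtt{e}_{x^{-1}}\mathtt{e}_x$ directly from the diagrams. By Remark~\ref{rem4.2-2}, $\mathtt{e}_{x^{-1}}$ is the horizontal reflection of $\mathtt{e}_x$, so its bottom arcs sit at the same positions as the top arcs of $\mathtt{e}_x$. Stacking $\mathtt{e}_{x^{-1}}$ on top of $\mathtt{e}_x$, each of the $a:=\mathbf{a}(x)$ top arcs of $\mathtt{e}_x$ pairs with the corresponding bottom arc of $\mathtt{e}_{x^{-1}}$ at the interface to produce a closed loop, while the propagating lines concatenate into vertical ones. The surviving top and bottom arcs of the product both lie at the cup-positions of $\mathtt{e}_x$, and hence
\begin{displaymath}
\mathtt{e}_{x^{-1}}\mathtt{e}_x=(v+v^{-1})^{a}\,\mathtt{e}_{\bar{x}},
\end{displaymath}
where $\bar{x}$ denotes the symmetric fully commutative involution whose Temperley-Lieb diagram has vertical propagating lines and arcs at the cup-positions of $\mathtt{e}_x$.

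Next, the assumption $\theta_x L_d\neq 0$, combined with Lemma~\ref{l5.25-1}, guarantees that the arcs of $\bar{x}$ are contained in the arcs of $\mathtt{e}_d$. An essentially identical diagrammatic argument then yields
\begin{displaymath}
\mathtt{e}_d\mathtt{e}_{\bar{x}}=(v+v^{-1})^{a}\,\mathtt{e}_d:
\end{displaymath}
the $a$ arcs of $\bar{x}$ match, at the interface, the corresponding arcs of $\mathtt{e}_d$ to create $a$ closed loops; the remaining arcs of $\mathtt{e}_d$ travel through the vertical propagating lines of $\mathtt{e}_{\bar{x}}$ unchanged; and the remaining propagating lines concatenate vertically. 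Combining the two steps gives $\mathtt{e}_d\mathtt{e}_{x^{-1}}\mathtt{e}_x=(v+v^{-1})^{2a}\mathtt{e}_d$, and setting $v=1$ produces the required multiplicity $2^{2a}$.

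The hard part will be the diagrammatic bookkeeping in the two steps above: one needs to check carefully that no residual non-vertical propagating lines or unexpected arc-pieces appear during composition, and that the surviving diagrams really coincide on the nose with $\mathtt{e}_{\bar{x}}$, respectively $\mathtt{e}_d$. Both checks depend in an essential way on the symmetry of the involution diagrams under horizontal reflection and on the precise arc-containment supplied by Lemma~\ref{l5.25-1}.
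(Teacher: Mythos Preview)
Your approach is essentially the same as the paper's: both reduce to the Temperley--Lieb calculation via Lemma~\ref{l5.25-2} and count $2a$ closed loops when straightening $\mathtt{e}_d\mathtt{e}_{x^{-1}}\mathtt{e}_x$.

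There is, however, one slip to fix. The surviving arcs of $\mathtt{e}_{x^{-1}}\mathtt{e}_x$ lie at the \emph{cap}-positions of $\mathtt{e}_x$, not the cup-positions: the top arcs of the product are the cups of $\mathtt{e}_{x^{-1}}$, which by Remark~\ref{rem4.2-2} sit at the cap-positions of $\mathtt{e}_x$, and likewise the bottom arcs are the caps of $\mathtt{e}_x$ themselves. This matters for your invocation of Lemma~\ref{l5.25-1}: that lemma tells you the cups of $\mathtt{e}_{x^{-1}}$ (equivalently, the caps of $\mathtt{e}_x$) are among the arcs of $\mathtt{e}_d$; it says nothing directly about the cups of $\mathtt{e}_x$. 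Once you correct $\bar{x}$ to have its arcs at the cap-positions of $\mathtt{e}_x$, the containment you need is exactly what Lemma~\ref{l5.25-1} provides, and the rest of your argument goes through as written.
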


\begin{proof}
The diagram $\mathtt{e}_{x^{-1}}\mathtt{e}_x$ is self-dual
(i.e. symmetric with respect to reflection in a horizontal line), has $a$ cups
and $a$ caps, and also $a$ circles in the middle before straightening.
By Lemma~\ref{l5.25-1}, each cup of this diagram corresponds to a cap of $\mathtt{e}_d$.
Therefore, the number of circles removed when
straightening the product $\mathtt{e}_d\mathtt{e}_{x^{-1}}\mathtt{e}_x$
equals $2a$. This implies the claim.
\end{proof}
}

Without loss of generality, we may assume that the number  $a$ of caps in
$\mathtt{e}_x$ is greater than or equal to the number $b$ of caps in
$\mathtt{e}_y$. 

In most cases below, we will see that the choice $v=d$ and $u=x^{-1}$ does the job.

{\bf Case~1.} Let us assume that $a>b$, so there is at least one cup of $\mathtt{e}_x$
which is not a cup of $\mathtt{e}_y$. Set $u=x^{-1}$  and $v=d$.

{ By Lemma \ref{multi}, the multiplicity of $\theta_d$ as a summand of 
$\theta_d\theta_{x^{-1}}\theta_x$ equals $2^{2a}$.}

If the underlying diagram of $\mathtt{e}_d\mathtt{e}_{x^{-1}}\mathtt{e}_y$
is not $\mathtt{e}_d$, then, by Lemma~\ref{l5.25-2}, the multiplicity of $\theta_d$ as a summand of 
$\theta_d\theta_{x^{-1}}\theta_y$ equals $0$ and we are done.
If the underlying diagram of $\mathtt{e}_d\mathtt{e}_{x^{-1}}\mathtt{e}_y$
is $\mathtt{e}_d$, we need to compute the number of closed loops 
removed when straightening the product 
$\mathtt{e}_d\mathtt{e}_{x^{-1}}\mathtt{e}_y$. Each closed loop contains at least one
cap. The total number of original caps in $\mathtt{e}_d\mathtt{e}_{x^{-1}}\mathtt{e}_y$
before straightening is $a+b+c$, where $c$ is the number of caps in $\mathtt{e}_d$. As 
$c\geq a> b$ by Lemma~\ref{l5.25-1}, the resulting diagram has at least $c$ caps,
see Remark~\ref{rem4.2-1}. Therefore, 
the number of closed loops removed during the straightening procedure is at most
$a+b<2a$. Consequently, the multiplicity of $\theta_d$ as a summand of 
$\theta_d\theta_{x^{-1}}\theta_y$ is {at most} $2^{a+b}<2^{2a}$. This completes Case~1.

{\bf Case~2.} Let us assume that $a=b$ and $\mathtt{e}_x$ and $\mathtt{e}_y$
have exactly the same cups. Set $u=x^{-1}$  and $v=d$.

Since $x$ and $y$ are different by assumption, 
there should be at least one cap of $\mathtt{e}_x$ which is not a cap of
$\mathtt{e}_y$. This implies that the underlying diagrams $\mathtt{e}_q$ of 
$\mathtt{e}_{x^{-1}}\mathtt{e}_x$ and $\mathtt{e}_p$ of $\mathtt{e}_{x^{-1}}\mathtt{e}_y$
are different. 

Now we claim that the underlying diagram for $\mathtt{e}_d\mathtt{e}_{x^{-1}}\mathtt{e}_x$
is $\mathtt{e}_d$, while the underlying diagram for $\mathtt{e}_d\mathtt{e}_{x^{-1}}\mathtt{e}_y$
is different from $\mathtt{e}_d$. This, of course, will complete 
the present case. The underlying diagram of $\mathtt{e}_d\mathtt{e}_{x^{-1}}\mathtt{e}_x$
being $\mathtt{e}_d$ follows directly from the fact that
each cap of $\mathtt{e}_x$ is also a cap of $\mathtt{e}_d$, see Lemma~\ref{l5.25-2}.
Here is an example:
\begin{displaymath}
\xymatrix@C=3mm@R=7mm{ 
\bullet\ar@/_5mm/@{{}{ }{}}[d]_{\mathtt{e}_d}\ar@/_3mm/@{-}[rrrrr]&
\bullet\ar@/_2mm/@{-}[rrr]&\bullet\ar@/_1mm/@{-}[r]&\bullet
&\bullet&\bullet&\bullet\ar@{-}[d]&\bullet\ar@/_2mm/@{-}[rrr]&\bullet\ar@/_1mm/@{-}[r]&\bullet&\bullet\\
\bullet\ar@{-}[drr]\ar@/^3mm/@{-}[rrrrr]\ar@/_5mm/@{{}{ }{}}[d]_{\mathtt{e}_{x^{\resizebox{1mm}{!}{-$1$}}}}
&\bullet\ar@/^2mm/@{-}[rrr]\ar@/_2mm/@{-}[rrr]
&\bullet\ar@/^1mm/@{-}[r]\ar@/_1mm/@{-}[r]&\bullet&\bullet&
\bullet\ar@{-}[dll]&\bullet\ar@{-}[drrrr]&\bullet\ar@/^2mm/@{-}[rrr]\ar@/_2mm/@{-}[rrr]
&\bullet\ar@/^1mm/@{-}[r]\ar@/_1mm/@{-}[r]&\bullet&\bullet\\
\bullet\ar@/_1mm/@{-}[r]\ar@/^1mm/@{-}[r]\ar@/_5mm/@{{}{ }{}}[d]_{\mathtt{e}_x}&\bullet&\bullet&\bullet&
\bullet\ar@/_2mm/@{-}[rrr]\ar@/^2mm/@{-}[rrr]&\bullet\ar@/_1mm/@{-}[r]\ar@/^1mm/@{-}[r]&\bullet
&\bullet&\bullet\ar@/_1mm/@{-}[r]\ar@/^1mm/@{-}[r]&\bullet&\bullet\\
\bullet\ar@{-}[urr]&\bullet\ar@/^2mm/@{-}[rrr]&\bullet\ar@/^1mm/@{-}[r]
&\bullet&\bullet&\bullet\ar@{-}[ull]&\bullet\ar@{-}[urrrr]
&\bullet\ar@/^2mm/@{-}[rrr]&\bullet\ar@/^1mm/@{-}[r]&\bullet&\bullet\\
}
\end{displaymath}

Let us now look at the underlying diagram for $\mathtt{e}_d\mathtt{e}_{x^{-1}}\mathtt{e}_y$.
Recall that:
\begin{itemize}
\item $\mathtt{e}_p$ and $\mathtt{e}_q$ have exactly the same cups;
\item not all caps in $\mathtt{e}_p$ and $\mathtt{e}_q$ are the same;
\item any cap of $\mathtt{e}_p$ and any cap of $\mathtt{e}_q$
is also a cap of $\mathtt{e}_d$;
\item $d=d_1d_2\cdots d_r$ is a product of pairwise distant special elements $d_i$.
\end{itemize}
This implies that there exists a special factor $d_i$ of $d$ such that
the number of caps $\mathtt{e}_{d_i}$ shares with 
$\mathtt{e}_q$ is different from the number of caps $\mathtt{e}_{d_i}$ 
shares with $\mathtt{e}_p$.

Take the leftmost such factor. Due to this assumption,
to the left of this factor in $\mathtt{e}_{d}$, the diagrams 
$\mathtt{e}_d\mathtt{e}_{x^{-1}}\mathtt{e}_x$ and
$\mathtt{e}_d\mathtt{e}_{x^{-1}}\mathtt{e}_y$ fully agree. We have two subcases.

{\bf Subcase~2a.} The number of caps $\mathtt{e}_{d_i}$ shares with
$\mathtt{e}_q$ is smaller than the number of caps $\mathtt{e}_{d_i}$ shares with
$\mathtt{e}_p$.

In this case, the additional caps of $\mathtt{e}_{d_i}$, when multiplied with 
$\mathtt{e}_p$, are {\color{orange}moved to the right}. Here is a fairly generic example:
\begin{displaymath}
\xymatrix@C=5mm@R=5mm{
\mathtt{e}_{d_i}:&&\bullet\ar@/_3mm/@{-}[rrrrr]&\bullet\ar@/_2mm/@{-}[rrr]&\bullet\ar@/_1mm/@{-}[r]
&\bullet&\bullet&\bullet&\bullet\ar@{-}[d]&\bullet\ar@/_1mm/@{-}[r]&\bullet\\
&&\bullet\ar@{.}[d]\ar@/^3mm/@{-}[rrrrr]&\bullet\ar@{.}[d]\ar@/^2mm/@{-}[rrr]
&\bullet\ar@{.}[d]\ar@/^1mm/@{-}[r]&\bullet\ar@{.}[d]&
\bullet\ar@{.}[d]&\bullet\ar@{.}[d]&\bullet\ar@{.}[d]&
\bullet\ar@{.}[d]\ar@/^1mm/@{-}[r]&\bullet\ar@{.}[d]\\
\mathtt{e}_p:&&\bullet\ar@{-}[d]&\bullet\ar@{-}@[orange][drrrr]&\bullet\ar@/_1mm/@{-}[r]&
\bullet&\bullet\ar@{-}@[orange][drr]&\bullet&\bullet&\bullet&\bullet\\
&&\bullet&\bullet\ar@/^2mm/@{-}[rrr]&\bullet\ar@/^1mm/@{-}[r]
&\bullet&\bullet&\bullet&\bullet&\bullet&\bullet\\
}
\end{displaymath}
This makes  the resulting diagram different from $\mathtt{e}_d$, as claimed. 

{\bf Subcase~2b.} The number of caps $\mathtt{e}_{d_i}$ shares with $\mathtt{e}_q$
is greater than the number of caps $\mathtt{e}_{d_i}$ shares with $\mathtt{e}_p$.

In this case, we have some additional points corresponding to propagating lines
to the left of the nested caps in $\mathtt{e}_p$. The rightmost of these 
points either hits a cap in $\mathtt{e}_d$ or it hits a propagating line in $\mathtt{e}_d$.
In the former case, we obtain a {\color{magenta}cap
in $\mathtt{e}_d\mathtt{e}_p$} which is not a cap in $\mathtt{e}_d$.
Here is a fairly generic example:
\begin{displaymath}
\xymatrix@C=5mm@R=5mm{
\mathtt{e}_d:&&\bullet\ar@/_3mm/@{-}[rrrrr]&\bullet\ar@/_2mm/@{-}[rrr]&\bullet\ar@/_1mm/@{-}[r]
&\bullet&\bullet&\bullet&\bullet\ar@{-}[d]&\bullet\ar@{-}[d]\\
&&\bullet\ar@{.}[d]\ar@/^3mm/@{-}[rrrrr]&\bullet\ar@{.}[d]\ar@/^2mm/@{-}[rrr]
&\bullet\ar@{.}[d]\ar@/^1mm/@{-}[r]&\bullet\ar@{.}[d]&
\bullet\ar@{.}[d]&\bullet\ar@{.}[d]&\bullet\ar@{.}[d]&
\bullet\ar@{.}[d]\\
\mathtt{e}_y:&&\bullet\ar@{-}@[magenta][d]&\bullet\ar@/_2mm/@{-}[rrr]&\bullet\ar@/_1mm/@{-}[r]&
\bullet&\bullet&\bullet\ar@{-}@[magenta][dllll]&\bullet\ar@{-}[dll]&\bullet\\
&&\bullet&\bullet&\bullet\ar@/^1mm/@{-}[r]&\bullet&\bullet&\bullet&\bullet&\bullet\\
}
\end{displaymath}
This makes the resulting diagram different from $\mathtt{e}_d$.

In the latter case, the resulting diagram has a 
{\color{violet}non-vertical propagating line}.
Here is a fairly generic example:
\begin{displaymath}
\xymatrix@C=5mm@R=5mm{
\mathtt{e}_d:&&\bullet\ar@/_3mm/@{-}[rrrrr]&\bullet\ar@/_2mm/@{-}[rrr]&\bullet\ar@/_1mm/@{-}[r]
&\bullet&\bullet&\bullet&\bullet\ar@{-}[d]&\bullet\ar@{-}[d]\\
&&\bullet\ar@{.}[d]\ar@/^3mm/@{-}[rrrrr]&\bullet\ar@{.}[d]\ar@/^2mm/@{-}[rrr]
&\bullet\ar@{.}[d]\ar@/^1mm/@{-}[r]&\bullet\ar@{.}[d]&
\bullet\ar@{.}[d]&\bullet\ar@{.}[d]&\bullet\ar@{.}[d]&
\bullet\ar@{.}[d]\\
\mathtt{e}_y:&&\bullet\ar@/_3mm/@{-}[rrrrr]&\bullet\ar@/_2mm/@{-}[rrr]&\bullet\ar@/_1mm/@{-}[r]&
\bullet&\bullet&\bullet&\bullet&\bullet\ar@{-}@[violet][dllllll]\\
&&\bullet&\bullet&\bullet\ar@/^1mm/@{-}[r]&\bullet&\bullet&\bullet&\bullet&\bullet\\
}
\end{displaymath}
This makes the resulting diagram again different from $\mathtt{e}_d$ and completes Case~2.

{\bf Case~3.} Let us assume that $a=b$ and $\mathtt{e}_x$ and $\mathtt{e}_y$ have 
exactly the same caps. Set $u=x^{-1}$  and $v=d$ as before.

Consider first the situation when the underlying diagrams of 
$\mathtt{e}_{x^{-1}}\mathtt{e}_x$ and $\mathtt{e}_{x^{-1}}\mathtt{e}_y$
coincide,  let us call this diagram $\mathtt{e}_z$. Note that 
$\mathtt{e}_z$ has $a$ caps. As $\mathtt{e}_x$ and $\mathtt{e}_y$
have the same caps but are assumed to be different, not all of their cups
can coincide. In particular, the multiplicity of
$\mathtt{e}_z$ in $\mathtt{e}_{x^{-1}}\mathtt{e}_y$ is strictly smaller
than the multiplicity $2^a$ of $\mathtt{e}_z$
in $\mathtt{e}_{x^{-1}}\mathtt{e}_x$. Consequently, if we now multiply 
with $\mathtt{e}_d$ on the left, we obtain $\mathtt{e}_d$ as the underlying
diagram in both $\mathtt{e}_d\mathtt{e}_{x^{-1}}\mathtt{e}_x$ and 
$\mathtt{e}_d\mathtt{e}_{x^{-1}}\mathtt{e}_y$, but with different multiplicities, 
and we are done with this situation.

Now consider the situation when the underlying diagrams $\mathtt{e}_q$ of 
$\mathtt{e}_{x^{-1}}\mathtt{e}_x$ and $\mathtt{e}_p$ of $\mathtt{e}_{x^{-1}}\mathtt{e}_y$ are different. 
Let us assume, for a contradiction, that 
\begin{displaymath}
\mathtt{e}_d\mathtt{e}_{x^{-1}}\mathtt{e}_x= 
\mathtt{e}_d\mathtt{e}_{x^{-1}}\mathtt{e}_y.
\end{displaymath}
Note that $\mathtt{e}_d\mathtt{e}_{x^{-1}}\mathtt{e}_x=2^{2a}\mathtt{e}_d$
{by Lemma~\ref{multi}}.
The total number of original caps in $\mathtt{e}_d\mathtt{e}_{x^{-1}}\mathtt{e}_y$
is $2a+r$, where $r$ is the number of caps in $\mathtt{e}_d$. 
The underlying diagram of $\mathtt{e}_d\mathtt{e}_{x^{-1}}\mathtt{e}_y$ is
$\mathtt{e}_d$, which accounts for $r$ caps. The only way to get
$2^{2a}$ as the multiplicity of $\mathtt{e}_d$ in $\mathtt{e}_d\mathtt{e}_{x^{-1}}\mathtt{e}_y$
is to have a bijection between the set of 
original caps in $\mathtt{e}_d\mathtt{e}_{x^{-1}}\mathtt{e}_y$ 
and the union of the set of all caps in $\mathtt{e}_d$ with the set of all
closed loops removed during the straightening procedure. In particular,
each such closed loop consists of exactly one cup and one cap.

By the previous paragraph, each propagating line in $\mathtt{e}_d$ must hit 
a propagating line in $\mathtt{e}_p$. Since all 
propagating lines in $\mathtt{e}_d$ are vertical, the same has to 
be true for the corresponding propagating
lines in $\mathtt{e}_p$ {by Lemma~\ref{l5.25-1}}. In other words, the set of propagating lines in
$\mathtt{e}_d$ is a subset of the set of propagating lines in $\mathtt{e}_p$.

Take now a cap $C$ in $\mathtt{e}_d$ which is not nested inside any other cap in 
$\mathtt{e}_d$. Due to our assumptions on $d$, the immediate outside
neighbors of $C$ are propagating lines. Therefore the endpoints of $C$
correspond to either a cup or two propagating lines in $\mathtt{e}_p$.
In the former case, all caps nested inside $C$ correspond to cups of $\mathtt{e}_p$.
In the latter case, both of these propagating lines have
to be vertical, for otherwise there would exist some extra caps in $\mathtt{e}_p$
which are not caps of $\mathtt{e}_d$. 

{
We proceed by induction on the number $\mathbf{k}$ of nested caps contained inside $C$ which do not
correspond to any cups in $\mathtt{e}_p$, to show that 
$\mathtt{e}_p$ and $\mathtt{e}_q$ coincide in the corresponding regions
which interact with $C$ and all inner points of $C$ during the 
multiplications $\mathtt{e}_d\mathtt{e}_p$ and 
$\mathtt{e}_d\mathtt{e}_q$. 
This implies $\mathtt{e}_p=\mathtt{e}_q$, which contradicts our assumption. 
If $\mathbf{k}=0$, the above argument shows that both endpoints of $C$
hit vertical propagating lines in $\mathtt{e}_p$. Since $\mathtt{e}_p$
and $\mathtt{e}_q$ have the same caps and all propagating lines of 
$\mathtt{e}_q$ are vertical by construction, these two propagating lines
of $\mathtt{e}_p$ are also propagating lines of $\mathtt{e}_q$, and
we are done. 
If $\mathbf{k}>0$, there is a unique outermost cap $C'$ nested
inside $C$ by our assumption that $d$ is a product of pairwise
distant special elements. The argument that we just applied to  $C$
applies to $C'$.  Proceeding inductively we obtain that $\mathtt{e}_p$ and $\mathtt{e}_q$ 
coincide at all parts that hit the endpoints and all inner points of $C$}. This completes Case~3.

{\bf Case~4.} Let us assume that $a=b$ and that some of the caps 
and some of the cups in  $\mathtt{e}_x$ and $\mathtt{e}_y$ are different.
Set $u=x^{-1}$  and $v=d$ as before. In most situations,
it is possible to adapt the argument we used in Case~3.

If $\mathtt{e}_d\mathtt{e}_{x^{-1}}\mathtt{e}_x\neq
\mathtt{e}_d\mathtt{e}_{x^{-1}}\mathtt{e}_y$, then we are done.
So, let us assume $\mathtt{e}_d\mathtt{e}_{x^{-1}}\mathtt{e}_x=
\mathtt{e}_d\mathtt{e}_{x^{-1}}\mathtt{e}_y$. Note that 
$\mathtt{e}_d\mathtt{e}_{x^{-1}}\mathtt{e}_x=2^{2a}\mathtt{e}_d$ {by Lemma~\ref{multi}}.

As before, if the underlying diagrams of $\mathtt{e}_{x^{-1}}\mathtt{e}_x$
and $\mathtt{e}_{x^{-1}}\mathtt{e}_y$ are the same, the multiplicity of
$\mathtt{e}_d$ in $\mathtt{e}_d\mathtt{e}_{x^{-1}}\mathtt{e}_y$ is strictly
smaller than $2^{2a}$. Therefore, it remains to consider the situation when 
the underlying diagram $\mathtt{e}_q$ of $\mathtt{e}_{x^{-1}}\mathtt{e}_x$
is different from the underlying diagram $\mathtt{e}_p$ of 
and $\mathtt{e}_{x^{-1}}\mathtt{e}_y$.
If $\mathtt{e}_q$
and $\mathtt{e}_p$ have the same caps, we
can use the argument from Case~3. In particular, we may assume that
not all caps in $\mathtt{e}_q$
and $\mathtt{e}_p$ agree. In this case we will need to construct 
a modification $\mathtt{e}_{d'}$ of $\mathtt{e}_{d}$ which will 
take the place of $\mathtt{e}_v$.

The same argument as in Case~3 shows that the set of propagating lines 
of $\mathtt{e}_d$ is a subset of both the set of propagating lines 
of $\mathtt{e}_p$ and the set of propagating lines 
of $\mathtt{e}_q$. Furthermore, the sets of caps of 
$\mathtt{e}_p$ and $\mathtt{e}_q$ are (different) subsets
of the set of caps of $\mathtt{e}_d$.

Let us consider some full collection $\mathcal{F}$ of nested caps in $\mathtt{e}_d$.
Let $\alpha$ and $\beta$ be the numbers of caps of $\mathtt{e}_p$,
respectively, $\mathtt{e}_q$ contained in this collection.

Note that $\mathtt{e}_q$ has $a$ caps while $\mathtt{e}_p$ has
at least $a$ caps. Therefore, since not all caps in 
$\mathtt{e}_p$ and $\mathtt{e}_q$ 
agree, we can assume that $\alpha>\beta$ for the chosen $\mathcal{F}$.
The argument from Case~3 implies that each cap in $\mathcal{F}$
either hits a cup or two vertical lines in $\mathtt{e}_p$. 
This means that the parts of $\mathtt{e}_p$ and $\mathtt{e}_q$ 
corresponding to $\mathcal{F}$ are as in the following example:

\resizebox{\textwidth}{!}{
$
\mathtt{e}_p:
\xymatrix@C=3mm@R=3mm{
\bullet\ar@{-}[d]&\bullet\ar@{-}[d]&\bullet\ar@/_3mm/@{-}[rrrrr]&\bullet\ar@/_2mm/@{-}[rrr]
&\bullet\ar@/_1mm/@{-}[r]&\bullet&\bullet&\bullet&\bullet\ar@{-}[d]&\bullet\ar@{-}[d]\\
\bullet&\bullet&\bullet\ar@/^3mm/@{-}[rrrrr]&\bullet\ar@/^2mm/@{-}[rrr]&
\bullet\ar@/^1mm/@{-}[r]&\bullet&\bullet&\bullet&\bullet&\bullet\\
}\qquad\qquad
\mathtt{e}_q:
\xymatrix@C=3mm@R=3mm{
\bullet\ar@{-}[d]&\bullet\ar@{-}[d]&\bullet\ar@{-}[d]&\bullet\ar@/_2mm/@{-}[rrr]
&\bullet\ar@/_1mm/@{-}[r]&\bullet&\bullet&\bullet\ar@{-}[d]&\bullet\ar@{-}[d]&\bullet\ar@{-}[d]\\
\bullet&\bullet&\bullet&\bullet\ar@/^2mm/@{-}[rrr]&\bullet\ar@/^1mm/@{-}[r]
&\bullet&\bullet&\bullet&\bullet&\bullet\\
}
$
}

Consider the element $d'$ such that $\mathtt{e}_{d'}$ is the same as
$\mathtt{e}_{d}$ except $\mathcal{F}$ is adjusted as follows:

\resizebox{\textwidth}{!}{
$
\mathtt{e}_d:
\xymatrix@C=3mm@R=7mm{
\bullet\ar@/_5mm/@{-}[rrrrrrrrr]&\bullet\ar@/_4mm/@{-}[rrrrrrr]
&\bullet\ar@/_3mm/@{-}[rrrrr]&\bullet\ar@/_2mm/@{-}[rrr]
&\bullet\ar@/_1mm/@{-}[r]&\bullet&\bullet&\bullet&\bullet&\bullet\\
\bullet\ar@/^5mm/@{-}[rrrrrrrrr]&\bullet\ar@/^4mm/@{-}[rrrrrrr]
&\bullet\ar@/^3mm/@{-}[rrrrr]&\bullet\ar@/^2mm/@{-}[rrr]&
\bullet\ar@/^1mm/@{-}[r]&\bullet&\bullet&\bullet&\bullet&\bullet\\
}\qquad\qquad
\mathtt{e}_{d'}:
\xymatrix@C=3mm@R=7mm{
\bullet\ar@/_5mm/@{-}[rrrrrrrrr]&\bullet\ar@/_4mm/@{-}[rrrrrrr]
&\bullet\ar@/_3mm/@{-}[rrrrr]&\bullet\ar@/_2mm/@{-}[rrr]
&\bullet\ar@/_1mm/@{-}[r]&\bullet&\bullet&\bullet&\bullet&\bullet\\
\bullet\ar@/^4mm/@{-}[rrrrrrrrr]&\bullet\ar@/^1mm/@{-}[r]
&\bullet&\bullet\ar@/^2mm/@{-}[rrr]&
\bullet\ar@/^1mm/@{-}[r]&\bullet&\bullet&\bullet\ar@/^1mm/@{-}[r]&\bullet&\bullet\\
}
$
}

Then $\mathtt{e}_{d'}\mathtt{e}_{p}$ is a multiple of $\mathtt{e}_{d}$
while $\mathtt{e}_{d'}\mathtt{e}_{q}$ is not. This completes the proof
for the positive answer. 

\subsection{Negative answer}\label{s5.4}

Let $d$ be a fully commutative involution which is not a product of 
pairwise distant special elements. We are going to prove that $d$
does not have the property described in Conjecture~\ref{conj2}\eqref{conj2.2}.
More explicitly, we will find two different fully commutative elements
$x$ and $y$ such that $\theta_xL_d$ is isomorphic to $\theta_y L_d$.

Recall that, for fully commutative elements, 
assertions \eqref{conj2.1} and \eqref{conj2.2} in Conjecture~\ref{conj2}
are equivalent. We also have the
obvious implications 
$\neg$\eqref{conj2.2}$\Rightarrow\neg$\eqref{conj2.3}$\Rightarrow\neg$\eqref{conj2.4}
of the other assertions.
The above therefore implies  $\mathbf{K}(d)=\mathtt{false}$ and, moreover, the rest of
Conjecture~\ref{conj2} for the involution $d$. 

If $d$ is not a product of pairwise distant special elements, the 
diagram $\mathtt{e}_d$ has two adjacent non-nested caps.
Let us fix a pair $A$ and $B$ 
of such adjacent non-nested caps
(with $A$ on the left). We may assume that they are not nested in 
some other cap which itself has an adjacent non-nested cap. Let $A'$ 
and $B'$ be the corresponding cups.

Define the element $\mathtt{e}_x$ by changing $\mathtt{e}_d$ as follows: 
\begin{itemize}
\item remove $B$ and  $B'$;
\item if applicable, remove all caps in which $A$ and $B$ are nested;
\item remove all cups corresponding to the latter caps;
\item replace all the removed cups and caps by propagating lines.
\end{itemize}
The latter process is unique due to the non-intersection condition.

Define the element $\mathtt{e}_y$  by changing $\mathtt{e}_d$ as follows:
\begin{itemize}
\item remove $A$ and $B'$;
\item if applicable, remove all caps in which $A$ and $B$ are nested;
\item remove all cups corresponding to the latter caps;
\item replace all the removed cups and caps by propagating lines.
\end{itemize}
Again, the latter process is unique due to the 
non-intersection condition. Here is an example, with
{\color{magenta}$A$} and {\color{teal}$B$} colored:

\resizebox{\textwidth}{!}{
$
\resizebox{5mm}{!}{$\mathtt{e}_d$}:\xymatrix@C=7mm@R=5mm{
\bullet\ar@/_3mm/@{-}[rrrrr]&
\bullet\ar@/_1mm/@{-}[r]&\bullet&\bullet\ar@/_1mm/@{-}[r]&\bullet&\bullet\\
\bullet\ar@/^3mm/@{-}[rrrrr]&\bullet\ar@/^1mm/@{-}@[magenta][r]&
\bullet&\bullet\ar@/^1mm/@{-}@[teal][r]&\bullet&\bullet\\
},\qquad\qquad
\resizebox{5mm}{!}{$\mathtt{e}_x$}:\xymatrix@C=7mm@R=5mm{
\bullet\ar@{-}[d]&
\bullet\ar@/_1mm/@{-}[r]&\bullet&\bullet\ar@{-}[d]&\bullet\ar@{-}[d]&\bullet\ar@{-}[d]\\
\bullet&\bullet\ar@/^1mm/@{-}[r]&
\bullet&\bullet&\bullet&\bullet\\
},\qquad\qquad
\resizebox{5mm}{!}{$\mathtt{e}_y$}:\xymatrix@C=7mm@R=5mm{
\bullet\ar@{-}[d]&
\bullet\ar@/_1mm/@{-}[r]&\bullet&\bullet\ar@{-}[dll]&\bullet\ar@{-}[dll]&\bullet\ar@{-}[d]\\
\bullet&\bullet&
\bullet&\bullet\ar@/^1mm/@{-}[r]&\bullet&\bullet\\
},\qquad
$
}

Let $c$ denote the number of caps in $\mathtt{e}_d$ and $a$ the number of caps in 
$\mathtt{e}_x$. Then $a\leq c-1$, by construction, and also $a$ equals the
number of caps in $\mathtt{e}_y$. We are going to prove that $\theta_xL_d$ and 
$\theta_y L_d$ are isomorphic as graded modules. For this we need some 
combinatorial preparation for estimates of graded shifts.
The following statements can probably be deduced from the results of
\cite{BS}, but it is easier to prove them directly.

\begin{lemma}\label{lem5.4.1}
Let $u$ and $w$ be two fully commutative permutations such that 
$\theta_u L_w\neq 0$. Let $k$ be the minimum of the numbers of 
caps in $\mathtt{e}_u$ and in $\mathtt{e}_w$. Then $\theta_u L_w$ is a graded self-dual module
and, for $|i|>k$, the graded component $(\theta_u L_w)_i$ is zero.
\end{lemma}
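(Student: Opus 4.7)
The plan is to prove the two claims separately, using graded biadjunction for the pair $(\theta_u,\theta_{u^{-1}})$ together with the Temperley--Lieb description of the Kazhdan--Lusztig product on the fully commutative part of the Hecke algebra.

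\emph{Self-duality.} The standard graded lift of $L_w$ is self-dual under the simple-preserving contravariant graded duality $\star$ on $\mathcal{O}_0^\mathbb{Z}$, and graded projective functors (in their standard lifts) commute with $\star$; see \cite{St}. Hence $\star\,\theta_u L_w\cong \theta_u\,\star L_w\cong \theta_u L_w$, which is the required graded self-duality.

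\emph{Degree bound.} I would compute the graded composition multiplicities of $\theta_u L_w$ and show they vanish outside the range $[-k,k]$. By the graded biadjunction between $\theta_u$ and $\theta_{u^{-1}}$,
$$[\theta_u L_w : L_x\langle i\rangle] = \dim\operatorname{Hom}_{\mathcal{O}_0^\mathbb{Z}}(\theta_{u^{-1}}P_x,\ L_w\langle -i\rangle),$$
which equals the graded multiplicity of $P_w\langle -i\rangle$ as a direct summand of $\theta_{u^{-1}}P_x$. By the identifications in Subsection~\ref{s2.6}, this multiplicity is the coefficient of $v^{-i}\underline{H}_w$ in the Hecke algebra product $\underline{H}_x\underline{H}_{u^{-1}}$. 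Since $w$ is fully commutative, this coefficient can be computed after projecting to the Temperley--Lieb quotient: one has
$$\mathtt{e}_x\mathtt{e}_{u^{-1}} = (v+v^{-1})^r\,\mathtt{e}_y$$
for some fully commutative $y$ and some $r\geq 0$, where $r$ is the number of closed loops removed during straightening. The coefficient of interest is therefore $[v^{-i}](v+v^{-1})^r\cdot\delta_{y,w}$, which is nonzero only when $y=w$ and $|i|\leq r$.

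It remains to bound $r$. Each closed loop uses a cap of $\mathtt{e}_x$ together with a cup of $\mathtt{e}_{u^{-1}}$, and by Remark~\ref{rem4.2-2} the cups of $\mathtt{e}_{u^{-1}}$ correspond to the caps of $\mathtt{e}_u$, so
$$r\leq\min(\#\text{caps in }\mathtt{e}_x,\ \#\text{caps in }\mathtt{e}_u).$$
When $y=w$, Remark~\ref{rem4.2-1} forces $\#\text{caps in }\mathtt{e}_w\geq\max(\#\text{caps in }\mathtt{e}_x,\ \#\text{caps in }\mathtt{e}_u)$; in particular $\#\text{caps in }\mathtt{e}_x\leq\#\text{caps in }\mathtt{e}_w$, and one obtains $r\leq\min(\#\text{caps in }\mathtt{e}_u,\ \#\text{caps in }\mathtt{e}_w)=k$. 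Hence $(\theta_u L_w)_i=0$ whenever $|i|>k$.

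\emph{Main obstacle.} There is no deep obstacle; once the decategorification framework is in place, the core combinatorial content is the bound on $r$, which is immediate from Remarks~\ref{rem4.2-1} and \ref{rem4.2-2}. The only delicate point is to verify that the graded conventions (direction of shift and orientation of the biadjunction) line up so that the Temperley--Lieb power $(v+v^{-1})^r$ translates into the symmetric range $[-r,r]\subset[-k,k]$ of nonzero graded components; this is standard given \cite{St}.
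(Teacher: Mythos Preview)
Your argument is correct and essentially complete, but it follows a different route from the paper. The paper's proof is shorter: after noting self-duality (same as you), it observes that $\theta_u L_w$ is a quotient of $\theta_u P_w$, so by self-duality it suffices to show $(\theta_u P_w)_i=0$ for $i<-k$. Since projectives live in non-negative degrees (positivity of the Koszul grading), the only source of negative degrees in $\theta_u P_w$ is the grading shifts of its indecomposable summands, which are read off from the single Temperley--Lieb product $\mathtt{e}_u\mathtt{e}_w$; the number of loops there is at most $k$ directly.

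Your approach instead computes, for each $x$, the multiplicity $[\theta_u L_w:L_x\langle i\rangle]$ via adjunction, reducing to the coefficient of $\underline{H}_w$ in $\underline{H}_x\underline{H}_{u^{-1}}$ and hence to loop-counting in $\mathtt{e}_x\mathtt{e}_{u^{-1}}$. This requires the extra step of invoking Remark~\ref{rem4.2-1} to convert the bound involving $\mathtt{e}_x$ back into one involving $\mathtt{e}_w$. The payoff is that your argument is symmetric in $i$ from the start (you never need to appeal to self-duality for the degree bound) and gives more explicit information about which composition factors can occur. A small cosmetic point: in the product $\mathtt{e}_x\mathtt{e}_{u^{-1}}$ (with $\mathtt{e}_x$ on top in the paper's convention), loops are formed from \emph{cups} of $\mathtt{e}_x$ and \emph{caps} of $\mathtt{e}_{u^{-1}}$, not the other way around; this does not affect your bound since the number of cups equals the number of caps in any Temperley--Lieb diagram.
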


\begin{proof}
The module  $\theta_u L_w$ is self-dual as it is the image of a self-dual
module $L_w$ under a projective functor. In order to prove the rest of 
the lemma, it is enough to argue that, for $i<-k$, the graded component
$(\theta_u P_w)_i$ is zero. Since the algebra of $\mathcal{O}_0$ is
positively graded, all standard graded 
lifts of projectives live in non-negative degrees.
When computing the product $\mathtt{e}_u\mathtt{e}_w$, the number of closed
loops removed in the straightening procedure is at most $k$.
This gives the scalar $(v+v^{-1})^m$, where $m\leq k$. Therefore
the maximal graded shift of a projective in $\theta_u P_w$
is bounded by $k$. The claim follows.
\end{proof}

\begin{corollary}\label{cor5.4.2}
Both modules $\theta_x L_d$ and $\theta_y L_d$ have simple tops, which 
live in degree $-a$.
\end{corollary}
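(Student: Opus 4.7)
For the non-vanishing, by Lemma~\ref{l5.25-1} it is enough to check that every cup of $\mathtt{e}_{x^{-1}}$ is a cup of $\mathtt{e}_d$. By Remark~\ref{rem4.2-2} these cups coincide, as pairs of endpoints, with the caps of $\mathtt{e}_x$; by the construction of $\mathtt{e}_x$ in Subsection~\ref{s5.4}, these caps are inherited unchanged from $\mathtt{e}_d$, and since $d$ is an involution its caps coincide as pairings with its cups. So $\theta_x L_d \neq 0$, and the argument for $y$ is identical. The simple top property is then immediate from the Brundan--Stroppel result reviewed in Subsection~\ref{s4.3}, which states that for fully commutative arguments $\theta_x L_d$ is either zero or has simple top.

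For the degree, Lemma~\ref{lem5.4.1} shows that both modules are graded self-dual with support contained in $[-a,a]$, since $a$ is the cap count in $\mathtt{e}_x$ (and in $\mathtt{e}_y$) and is no larger than the cap count of $\mathtt{e}_d$. Because $A$ is Koszul (hence positively graded) and the standard graded lift of each simple is concentrated in degree $0$, the simple top of a self-dual indecomposable module sits in its most negative non-zero degree; so the top of $\theta_x L_d$ lives in some degree $-k$ with $k \le a$, with socle in degree $k$ by self-duality. The task is to show $k = a$.

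The main obstacle is this last step. My plan is a computation in the Temperley--Lieb algebra: the product $\mathtt{e}_{x^{-1}}\mathtt{e}_x$ straightens to $(v+v^{-1})^a \mathtt{e}_{d_x}$, where $d_x$ is the fully commutative involution whose cups are those of $\mathtt{e}_x$ and whose propagating lines are vertical; the $a$ circles arise when the cups of $\mathtt{e}_x$ meet the matching caps of $\mathtt{e}_{x^{-1}}$ in the middle. Lemma~\ref{l5.25-2} then yields
\[
\theta_{x^{-1}}\theta_x L_d \;\cong\; \bigoplus_{j=0}^{a}\binom{a}{j}\, \theta_{d_x}L_d\,\langle a-2j\rangle,
\]
and $\theta_{d_x}L_d\neq 0$ by Lemma~\ref{l5.25-1}, as the cups of $\mathtt{e}_{d_x}$ are cups of $\mathtt{e}_x$, hence of $\mathtt{e}_d$. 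The support of $\theta_{x^{-1}}\theta_x L_d$ therefore reaches at least $\pm a$, and via the adjunction $\mathrm{Hom}(\theta_x L_d, \theta_x L_d\langle i\rangle)\cong\mathrm{Hom}(L_d,\theta_{x^{-1}}\theta_x L_d\langle i\rangle)$ this forces the support of $\theta_x L_d$ itself to reach $\pm a$, giving $k=a$. The same argument, with $y^{-1}$ in place of $x^{-1}$, handles $\theta_y L_d$.
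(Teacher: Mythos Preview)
Your non-vanishing and simple-top arguments are fine, as is the bound $k\le a$ from Lemma~\ref{lem5.4.1}. The gap is in your final step. The adjunction $\mathrm{Hom}(\theta_x L_d,\theta_x L_d\langle i\rangle)\cong\mathrm{Hom}(L_d,\theta_{x^{-1}}\theta_x L_d\langle i\rangle)$ only detects occurrences of $L_d$ in the \emph{socle} of $\theta_{x^{-1}}\theta_x L_d$; it says nothing about the full support. So from the fact that the support of $\theta_{x^{-1}}\theta_x L_d \cong \bigoplus_j\binom{a}{j}\theta_{d_x}L_d\langle a-2j\rangle$ reaches $\pm a$ you cannot deduce anything about $k$ via the adjunction. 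What you would actually need is that the simple socle of $\theta_{d_x}L_d$ is $L_d$ and that it sits in degree $a$; you have argued neither, and the second is essentially the statement you are trying to prove (with $d_x$ in place of $x$), so as written the argument is circular.

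Your route can be repaired, at some cost. From $\mathtt{e}_{d_x}^2=(v+v^{-1})^a\mathtt{e}_{d_x}$ and $\mathrm{End}(\theta_{d_x}L_d)\ne 0$ one gets, by adjunction, $\mathrm{Hom}(\theta_{d_x}L_d,L_d)\ne 0$, so the simple top (hence, by self-duality, the socle) of $\theta_{d_x}L_d$ is $L_d$, say in degree $-k'$ with $0\le k'\le a$. Your adjunction then shows that $\theta_x L_d$ has nonzero graded endomorphisms precisely in the degrees $k'-a,\,k'-a+2,\dots,k'+a$; since $\theta_x L_d$ has simple top and $A$ is positively graded, its endomorphism ring lives in non-negative degrees, forcing $k'=a$, and then the top degree $2a$ forces $k=a$. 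The paper takes a shorter and different route: it works with $\theta_x P_d$ rather than $\theta_{x^{-1}}\theta_x L_d$, observing that $\mathtt{e}_x\mathtt{e}_d$ straightens with exactly $a$ circles so that $\theta_x P_d$ has a nonzero degree $-a$ component, and then applies Lemma~\ref{lem5.4.1} to the simple subquotients of $\mathrm{rad}\,P_d$ (all in strictly positive degree) to show this component must come from $\theta_x L_d$.
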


\begin{proof}
Both modules $\theta_x L_d$ and $\theta_y L_d$ have simple tops by
\cite[Theorem~4.11]{BS}. We prove the second claim for $\theta_x L_d$. 
For $\theta_y L_d$ the arguments are similar. 
Lemma~\ref{lem5.4.1} implies that
the simple top of $\theta_x L_d$ lives in some degree $i\geq -a$. 

Let us now look more closely at the proof of Lemma~\ref{lem5.4.1}.
Note that, when computing the product $\mathtt{e}_x\mathtt{e}_d$,
we need to remove exactly $a$ circles. This means that the
degree $-a$ component of $\theta_x P_d$ is non-zero.

At the same time, any simple subquotient $L_w$ in the radical
of $P_d$ lives in a strictly positive degree. Therefore, {
using Lemma~\ref{lem5.4.1} and an appropriate shift of
grading in the positive direction}, the module $\theta_x L_w$ lives in
degrees that are strictly bigger than $-a$. Thus 
the degree $-a$ component of $\theta_x L_d$ is indeed non-zero.
Due to the positivity of the grading, this component is
the top of $\theta_x L_d$.
\end{proof}

We want to prove that two graded modules $\theta_x L_d$ and $\theta_y L_d$
which have simple tops that live in the same degree are isomorphic.
Due to the positivity of the grading, it is enough to show that there
is a non-zero degree zero morphism from $\theta_x L_d$ to $\theta_y L_d$.
By adjunction, this is equivalent to the existence of a degree zero morphism
from $L_d$ to $\theta_{x^{-1}}\theta_y L_d$.

By construction, $\mathtt{e}_x$ and $\mathtt{e}_y$ have the same 
cups. Therefore, during the straightening of the product 
$\mathtt{e}_{x^{-1}}\mathtt{e}_y$, there are $a$ loops to remove.
Let $\mathtt{e}_p$ be the underlying diagram of $\mathtt{e}_{x^{-1}}\mathtt{e}_y$
and note that it has exactly the same caps as $\mathtt{e}_y$.
By an analogue of Corollary~\ref{cor5.4.2} for $\mathtt{e}_p$
(which works verbatim since $\mathtt{e}_p$ and $\mathtt{e}_y$
have the same caps), the module $\theta_p L_d$ is a self-dual
indecomposable module with simple top in degree $-a$. 
Since $\theta_p$ appears exactly once with shift $a$ in the 
decomposition of $\theta_{x^{-1}}\theta_y$, it follows that 
$L_d$ appears, as a graded module, in the socle of $\theta_{x^{-1}}\theta_y L_d$.
This completes the proof of the negative answer and the proof of Theorem~\ref{thm5.2.1}.

\subsection{Sanity check: comparison to previously known results}\label{s5.5}

As already mentioned in Subsection~\ref{s3.6}, it is known that 
all elements of the form $w_0^\mathfrak{p}w_0$, where $\mathfrak{p}$ is 
a parabolic subalgebra  of $\mathfrak{sl}_n$, are Kostant positive.

If we take $\mathfrak{p}$ to be a maximal parabolic subalgebra, that is,
one for which the semisimple part of the Levi factor equals
$\mathfrak{sl}_i\oplus \mathfrak{sl}_{n-i}$, for $i=0,1,\dots,\lfloor\frac{n}{2}\rfloor$,
the element $w_0^\mathfrak{p}w_0$ turns out to be fully commutative.
The Temperley-Lieb diagram of the element $w_0^\mathfrak{p}w_0$ is
as follows, where $i$ is the number of caps:
\begin{displaymath}
\xymatrix{ 
\bullet\ar@/_3mm/@{-}[rrrrr]&\dots&\bullet\ar@/_1mm/@{-}[r]&
\bullet&\dots&\bullet&\bullet\ar@{-}[dllllll]&\dots&\bullet\ar@{-}[dllllll]\\
\bullet&\dots&\bullet&\bullet\ar@/^3mm/@{-}[rrrrr]&\dots
&\bullet\ar@/^1mm/@{-}[r]&\bullet&\dots&\bullet
} 
\end{displaymath}
If $i=\frac{n}{2}$, this element is an involution, in fact, it is $\sigma_{a,b}$,
where $a=\frac{n}{2}$ and $b=\frac{n}{2}-1$. If $i<\frac{n}{2}$, 
the above element belongs to the left Kazhdan--Lusztig cell of $\sigma_{a,b}$, 
where $a=i$ and $b=i-1$. Therefore, for such elements, our Theorem~\ref{thm5.2.1}
agrees with the previous results.

Moreover, all known results in small ranks mentioned in 
Subsection~\ref{s3.6} indeed agree with Theorem~\ref{thm5.2.1}.

We also note that Theorem~\ref{thm5.2.1}, combined with \cite[Theorem~1.1]{Ka},
gives a lot of new full answers to Kostant's problem even for not necessarily
fully commutative permutations.

\subsection{Problems to extend outside fully commutative elements}\label{s5.6}

The fact that the answers to
Kostant's problem for the elements $s_1s_2s_1$ and $s_2s_3s_2$ of $S_5$
are different, see \cite[Section~4]{KaM}, suggests that it will not be straightforward to 
extend Theorem~\ref{thm5.2.1} outside the set of fully commutative elements.

\section{Asymptotic results}\label{s7}

\subsection{Various sequences}\label{s7.1}

For $n\in \mathbb{Z}_{\geq 1}$,  we denote
\begin{itemize}
\item by $\mathbf{ki}_n$ the number of fully commutative 
Kostant involutions in $S_n$;
\item by $\mathbf{k}_n$ the number of fully commutative 
$w\in S_n$ for which $\mathbf{K}(w)=\mathtt{true}$;
\item by $\mathbf{mi}_n$ the number of fully commutative 
involutions in $S_n$;
\item by $\mathbf{m}_n$ the number of fully commutative 
elements in  $S_n$.
\end{itemize}
For $a\in \mathbb{Z}_{\geq 1}$ and $\mathbf{x}\in\{\mathbf{ki}_n,
\mathbf{k}_n,\mathbf{mi}_n,\mathbf{m}_n\}$, we denote by 
$\mathbf{x}^a$ the number of elements in the family $\mathbf{x}$
with exactly $a$ caps.

It is very well-known that $\mathbf{m}_n$ equals 
the $n$-th Catalan number 
\begin{displaymath}
C_n:=\frac{(2n)!}{n!(n+1)!}=\frac{1}{n+1}\binom{2n}{n}.
\end{displaymath}

\subsection{Main asymptotic results}\label{s7.2}

\begin{theorem}\label{thm5.6.1}
{\hspace{1mm}}

\begin{enumerate}[$($a$)$]
\item\label{thm5.6.1-1} We have 
$\displaystyle\lim_{n\to\infty}\frac{\mathbf{ki}_n}{\mathbf{mi}_n}= 0$. 
\item\label{thm5.6.1-2} We have 
$\displaystyle\lim_{n\to\infty}\frac{\mathbf{k}_n}{\mathbf{m}_n}= 0$. 
\item\label{thm5.6.1-3} For any fixed $a\in\{0,1,\dots,\lfloor\frac{n}{2}\rfloor\}$, we have 
$\displaystyle\lim_{n\to\infty}\frac{\mathbf{ki}^a_n}{\mathbf{mi}^a_n}= 1$. 
\end{enumerate}
\end{theorem}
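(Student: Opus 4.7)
The plan is to reduce Theorem~\ref{thm5.6.1} to a short list of exact counting formulas and then deduce all three asymptotics by elementary estimates.

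First I would compute the denominators by invoking the Robinson--Schensted correspondence recalled in Subsection~\ref{s4.1}. Fully commutative involutions in $S_n$ with exactly $a$ caps are in bijection with standard Young tableaux of two-row shape $(n-a,a)$, so
\[
\mathbf{mi}_n^a \;=\; f^{(n-a,a)} \;=\; \binom{n}{a} - \binom{n}{a-1},
\]
while fully commutative elements with $a$ caps correspond to pairs of such tableaux of common shape, so $\mathbf{m}_n^a = (f^{(n-a,a)})^2$. Summing yields the classical identities $\mathbf{mi}_n = \binom{n}{\lfloor n/2 \rfloor}$ and $\mathbf{m}_n = C_n$.

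Next I would compute the numerators directly from Theorem~\ref{thm5.2.1}. A Kostant involution with $a$ caps is uniquely an ordered (left-to-right) product $\sigma_{i_1,j_1}\cdots\sigma_{i_r,j_r}$ of $r \ge 1$ pairwise distant special involutions with $a_s := j_s+1$ forming a composition $a = a_1 + \cdots + a_r$, enumerated by $\binom{a-1}{r-1}$. The distance condition translates exactly to the requirement that any two consecutive special blocks along $\{1,\ldots,n\}$ be separated by at least one propagating line, so distributing the $n-2a$ propagating lines into $r+1$ slots---with each of the $r-1$ internal slots reserving at least one line---contributes $\binom{n-2a+1}{r}$ placements. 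A single application of Vandermonde's convolution then collapses the sum to
\[
\mathbf{ki}_n^a \;=\; \sum_{r=1}^{a}\binom{a-1}{r-1}\binom{n-2a+1}{r} \;=\; \binom{n-a}{a},
\]
so that $\mathbf{ki}_n = \sum_{a=0}^{\lfloor n/2 \rfloor}\binom{n-a}{a} = F_{n+1}$, the $(n+1)$-st Fibonacci number. For $\mathbf{k}_n^a$ I would combine the first bullet of Subsection~\ref{s3.6} (constancy of Kostant positivity on Kazhdan--Lusztig left cells) with the facts that in type $A$ each left cell contains a unique involution (its Duflo involution) and has cardinality equal to the number of SYT of the corresponding shape, so $\mathbf{k}_n^a = \binom{n-a}{a}\,f^{(n-a,a)}$.

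With these closed forms in hand, the asymptotic estimates become routine. For (a), Binet's formula $F_{n+1} \sim \phi^{n+1}/\sqrt{5}$ with $\phi = (1+\sqrt{5})/2$ and Stirling's estimate $\binom{n}{\lfloor n/2 \rfloor} \sim 2^n\sqrt{2/(\pi n)}$ give $\mathbf{ki}_n/\mathbf{mi}_n = O\bigl(\sqrt{n}\,(\phi/2)^n\bigr) \to 0$ since $\phi < 2$. For (c), for fixed $a$ both $\binom{n-a}{a}$ and $\binom{n}{a} - \binom{n}{a-1}$ are polynomials in $n$ of degree $a$ with leading coefficient $1/a!$, so their ratio tends to $1$. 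For (b), the uniform bound $f^{(n-a,a)} \le \binom{n}{\lfloor n/2 \rfloor}$ gives
\[
\mathbf{k}_n \;\le\; \binom{n}{\lfloor n/2 \rfloor}\sum_{a=0}^{\lfloor n/2\rfloor}\binom{n-a}{a} \;=\; \binom{n}{\lfloor n/2 \rfloor}\,F_{n+1} \;=\; O\bigl((2\phi)^n/\sqrt{n}\bigr),
\]
which, combined with $\mathbf{m}_n = C_n \sim 4^n/(\sqrt{\pi}\,n^{3/2})$, yields $\mathbf{k}_n/\mathbf{m}_n = O\bigl(n\,(\phi/2)^n\bigr) \to 0$; the decisive inequality is $2\phi < 4$. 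The hardest step is the second one: correctly encoding the ``pairwise distant'' condition via the slot-reservation count so that the Vandermonde collapse produces the clean formula $\binom{n-a}{a}$ and the Fibonacci growth of $\mathbf{ki}_n$. Once this is in place, the whole theorem is driven by the elementary numerical chain $\phi < 2 < 2\phi < 4$.
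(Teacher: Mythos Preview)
Your proposal is correct and reaches the same closed formulas as the paper, but your route to them is genuinely different in two places, and in both cases arguably more direct.

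For $\mathbf{ki}_n^a$, the paper proceeds recursively: it removes the strand at the leftmost top point to obtain the Fibonacci-polynomial recursion $\mathbf{ki}_n^a=\mathbf{ki}_{n-1}^a+\mathbf{ki}_{n-2}^{a-1}$ and then identifies the coefficients as $\binom{n-a}{a}$ via Pascal's rule. You instead enumerate Kostant involutions directly by reading off from Theorem~\ref{thm5.2.1} the block structure (compositions of $a$ into $r$ parts) and the ``at least one propagating line between blocks'' constraint (stars-and-bars), then collapse the resulting sum with Vandermonde. Your argument makes the formula $\binom{n-a}{a}$ appear in one shot rather than via a recursion; the paper's bijection is perhaps more conceptual but requires a separate identification step. (A trivial remark: your displayed sum starts at $r=1$, so the case $a=0$ should be stated separately; the formula $\binom{n}{0}=1$ is of course fine.)

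For $\mathbf{mi}_n$, the paper sets up and solves a Catalan-weighted recursion for the central binomial coefficients (Proposition~\ref{cor7.5-1}). You bypass this entirely by invoking Robinson--Schensted for involutions: the telescoping sum $\sum_a\bigl(\binom{n}{a}-\binom{n}{a-1}\bigr)=\binom{n}{\lfloor n/2\rfloor}$ is immediate. This is shorter and, as the paper itself notes after Proposition~\ref{cor7.5-1}, essentially the Simion--Schmidt observation.

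For the asymptotics, your argument for part~(b) is cleaner than the paper's: you use the uniform bound $f^{(n-a,a)}\le\binom{n}{\lfloor n/2\rfloor}$ to factor the sum as $\mathbf{k}_n\le\binom{n}{\lfloor n/2\rfloor}\cdot\mathbf{ki}_n$, reducing everything to the single inequality $2\phi<4$. The paper instead manipulates the explicit sum through several rewritings before reaching an analogous bound. Parts~(a) and~(c) are handled the same way in both, up to your use of Stirling where the paper uses the cruder $\binom{2k}{k}\ge 4^k/(2k+1)$; either suffices.
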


The remainder of this section is devoted to the proof of this
theorem. In Subsections~\ref{s7.3}, \ref{s7.4} and \ref{s7.5}, 
we first establish some explicit formulae
for the enumeration of the main protagonists defined in the previous
subsection. We are sure that some of the combinatorial arguments 
and results presented in this section are not new and can be 
found in or derived from the existing literature. However, we feel
that is would be more difficult to find appropriate references
than to prove these results. When working on the proofs,
the Online Encyclopedia of Integer Sequences was really helpful.

\subsection{Fully commutative Kostant involutions}\label{s7.3}

Recall the family of Fibonacci polynomials $F_n(x)$, where
$n\geq 0$, given by the following recursion:
\begin{displaymath}
F_0(x)=1,\qquad
F_1(x)=x,\qquad
F_n(x)=xF_{n-1}(x)+F_{n-2}(x),\quad n\geq 2.
\end{displaymath}
Here are some initial members of this family:
\begin{displaymath}
\begin{array}{c||c|c|c|c|c|c}
n:&0&1&2&3&4&5\\
\hline\hline
F_n(x):&1&x&x^2+1&x^3+2x&x^4+3x^2+1&x^5+4x^3+3x
\end{array}
\end{displaymath}
The evaluation $F_n(1)$ is exactly the $n$-th Fibonacci number.

\begin{proposition}\label{prop7.3-1}
We have $\displaystyle
F_n(x)=\sum_{a=0}^{\lfloor\frac{n}{2}\rfloor}\mathbf{ki}_n^a\cdot x^{n-2a}$.
\end{proposition}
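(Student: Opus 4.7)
The plan is to identify each Kostant involution in $S_n$ with a tiling of a $1\times n$ strip by monominoes and dominoes, and then conclude using the well-known generating polynomial of such tilings.

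First, using Theorem~\ref{thm5.2.1} together with the description of special involutions in Subsection~\ref{s5.1}, every Kostant involution $d\in S_n$ admits a unique left-to-right decomposition along positions $1,2,\dots,n$ as an interleaving of \emph{vertical propagating lines} (each covering one position) and \emph{special blocks} $\sigma_{i,j}$ (each covering $2(j+1)$ consecutive positions and contributing $j+1$ nested caps). The ``distant'' condition of Theorem~\ref{thm5.2.1} is exactly the statement that any two special blocks in this decomposition are separated by at least one vertical line.

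Next, I would construct a weight-preserving bijection from Kostant involutions in $S_n$ to tilings of a $1\times n$ strip by monominoes and dominoes, sending each vertical line to a monomino and each special block with $j+1$ nested caps to a run of $j+1$ consecutive dominoes. The inverse sends a tiling to the Kostant involution whose special blocks correspond to the maximal runs of consecutive dominoes in the tiling. The critical point to verify is that maximality of a domino run in a tiling matches exactly the ``distant'' condition on special blocks: two maximal runs of dominoes are separated by at least one monomino in the tiling if and only if the two corresponding special blocks are separated by at least one vertical line in the involution. Under this bijection, Kostant involutions in $S_n$ with exactly $a$ caps correspond to tilings of $1\times n$ with $a$ dominoes and $n-2a$ monominoes, of which there are $\binom{n-a}{a}$. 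Hence $\mathbf{ki}_n^a=\binom{n-a}{a}$.

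Finally, I would recognize $\sum_{a=0}^{\lfloor n/2\rfloor}\binom{n-a}{a}\,x^{n-2a}$ as $F_n(x)$. This classical identity follows by induction from the Fibonacci recursion, or, more transparently, by observing that the generating polynomial of such tilings, in which each monomino contributes a factor of $x$ and each domino contributes a factor of $1$, satisfies $F_n(x) = x F_{n-1}(x) + F_{n-2}(x)$ (obtained by peeling off the rightmost tile), with matching base cases $F_0(x)=1$ and $F_1(x)=x$. I do not anticipate any serious obstacle beyond carefully spelling out the bijection and verifying the correspondence between the maximality and distant conditions, which is forced by the explicit form of $\sigma_{i,j}$ recalled in Subsection~\ref{s5.1}.
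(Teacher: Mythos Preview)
Your proof is correct and takes a somewhat different route from the paper's. The paper argues directly on Kostant involutions: it checks the small cases and then establishes the recursion $\mathbf{ki}_n^a=\mathbf{ki}_{n-1}^a+\mathbf{ki}_{n-2}^{a-1}$ by examining the strand at position~$1$ (if vertical, delete it; if a cup, delete the outermost cup and cap of the leftmost special block). You instead set up a bijection with monomino--domino tilings of a $1\times n$ strip and then invoke the standard generating polynomial for such tilings. The two arguments are closely related---your ``peel off the rightmost tile'' is the same recursion transported through the bijection---but the tiling formulation is arguably cleaner and has the bonus of immediately yielding $\mathbf{ki}_n^a=\binom{n-a}{a}$, which the paper obtains separately as Corollary~\ref{cor7.3-3}. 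The one point requiring care, which you correctly flag, is that the ``distant'' condition on special blocks matches precisely the condition that distinct maximal domino runs are separated by at least one monomino; this is immediate from the definition of extended support in Subsection~\ref{s5.1}.
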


\begin{proof}
This is easy to check for $n=1,2$. Therefore, it suffices to show that 
the numbers $\mathbf{ki}_n^a$ satisfy the same recursion as the coefficients
of the Fibonacci polynomials. 
{Observe that $\mathbf{ki}_n^a=0$ for $a> \lfloor\frac{n}{2}\rfloor$.}

Let $d$ be a Kostant involution and look at the strand in $\mathtt{e}_d$
which starts at the top point $1$. If this strand is vertical,
removing it yields a Kostant involution for $n-1$ with the same number
of caps as $\mathtt{e}_d$. If this strand is a cup, removing it together with the corresponding
cap produces a Kostant involution for $n-2$ with one fewer cap than $\mathtt{e}_d$.
This defines a bijection between the set of all 
Kostant involutions for $n$ with $a$ caps and the union of the
set of  all Kostant involutions for $n-1$ with $a$ caps and the set of all 
Kostant involutions for $n-2$ with $a-1$ caps.
This implies $\mathbf{ki}_{n}^a=\mathbf{ki}_{n-1}^a+\mathbf{ki}_{n-2}^{a-1}$,
which establishes the necessary recursion.
\end{proof}

\begin{corollary}\label{cor7.3-2}
The number $\mathbf{ki}_n$ is the $n$-th Fibonacci number. 
\end{corollary}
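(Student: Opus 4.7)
The plan is extremely short: specialize Proposition~\ref{prop7.3-1} at $x=1$. The identity
\begin{displaymath}
F_n(x)=\sum_{a=0}^{\lfloor n/2 \rfloor} \mathbf{ki}_n^a \cdot x^{n-2a}
\end{displaymath}
holds in $\mathbb{Z}[x]$, so we may evaluate both sides at $x=1$. The right-hand side becomes $\sum_{a} \mathbf{ki}_n^a$, which by the definition of $\mathbf{ki}_n^a$ (as the number of fully commutative Kostant involutions in $S_n$ with exactly $a$ caps) equals the total count $\mathbf{ki}_n$.

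For the left-hand side, I would briefly record the fact already noted just before Proposition~\ref{prop7.3-1}: the evaluation $F_n(1)$ is the $n$-th Fibonacci number. This is immediate from the recursion $F_n(x) = xF_{n-1}(x)+F_{n-2}(x)$ together with the initial values $F_0(1)=1$ and $F_1(1)=1$, which at $x=1$ becomes precisely the Fibonacci recurrence.

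Combining these two observations gives $\mathbf{ki}_n = F_n(1) = f_n$, the $n$-th Fibonacci number. There is no real obstacle here; the substantive combinatorial content, namely the bijection arising from inspecting whether the strand of $\mathtt{e}_d$ starting at the top-left point is vertical or a cup, has already been carried out in the proof of Proposition~\ref{prop7.3-1}, and the corollary is a one-line specialization of that result.
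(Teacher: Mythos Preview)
Your proof is correct and follows exactly the same approach as the paper, which simply says ``Evaluate the equality in Proposition~\ref{prop7.3-1} at $1$.'' Your version merely spells out in a bit more detail why the two sides become $\mathbf{ki}_n$ and the $n$-th Fibonacci number, respectively.
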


\begin{proof}
Evaluate the equality in Proposition~\ref{prop7.3-1} at $1$. 
\end{proof}

\begin{corollary}\label{cor7.3-3}
We have $\mathbf{ki}_n^a=\binom{n-a}{a}$. 
\end{corollary}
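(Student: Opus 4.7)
The plan is a short induction on $n$, leveraging the recursion for $\mathbf{ki}_n^a$ already established during the proof of Proposition~\ref{prop7.3-1}. From that proof, the bijection given by classifying a Kostant involution according to whether the strand in $\mathtt{e}_d$ starting at the top point $1$ is vertical or is a cup yields
$$\mathbf{ki}_n^a = \mathbf{ki}_{n-1}^a + \mathbf{ki}_{n-2}^{a-1}$$
for all $n \geq 2$ and $a \geq 1$. So the only thing left is to verify that the binomial coefficients $\binom{n-a}{a}$ satisfy the same recursion and agree on the base cases.

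For the recursive step, I would apply Pascal's identity with parameters $m = n-1-a$ and $k = a$ to obtain
$$\binom{n-a}{a} = \binom{n-1-a}{a} + \binom{n-1-a}{a-1} = \binom{(n-1)-a}{a} + \binom{(n-2)-(a-1)}{a-1},$$
which is exactly the shape of the recursion above. Under the inductive hypothesis, the right-hand side equals $\mathbf{ki}_{n-1}^a + \mathbf{ki}_{n-2}^{a-1} = \mathbf{ki}_n^a$, so the recursive step goes through verbatim.

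For the base, I would verify the cases $n \in \{0, 1\}$ and $a = 0$ directly: the only Kostant involution with no caps is the identity, giving $\mathbf{ki}_n^0 = 1 = \binom{n}{0}$. It then remains to check that the degenerate range $a > \lfloor n/2 \rfloor$, where $\mathbf{ki}_n^a = 0$ by the remark at the start of the proof of Proposition~\ref{prop7.3-1}, matches $\binom{n-a}{a} = 0$; this holds because $a > \lfloor n/2 \rfloor$ forces $a > n - a$.

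I do not foresee a genuine obstacle here: once the recursion from Proposition~\ref{prop7.3-1} is in hand, the result reduces to Pascal's identity and bookkeeping of the edge ranges. The one subtlety worth flagging is simply making sure the vanishing conventions on both sides line up, so that the induction applies uniformly for all $a$ in $\{0, 1, \dots, \lfloor n/2 \rfloor\}$.
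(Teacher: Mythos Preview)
Your proposal is correct and matches the paper's own proof essentially verbatim: both invoke the recursion $\mathbf{ki}_n^a=\mathbf{ki}_{n-1}^a+\mathbf{ki}_{n-2}^{a-1}$ established in the proof of Proposition~\ref{prop7.3-1} and then check that $\binom{n-a}{a}$ satisfies the same recursion via Pascal's identity. Your treatment of the base cases and the vanishing range $a>\lfloor n/2\rfloor$ is slightly more explicit than the paper's, but there is no substantive difference.
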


\begin{proof}
Again, this is easy to verify for small values of $n$, so we only 
need to check that the binomial coefficients on the right hand
side satisfy the same recursion as $\mathbf{ki}_n^a$. By the usual Pascal triangle formula,
we have:
\begin{displaymath}
\binom{n-a}{a}=\binom{(n-1)-a}{a}+\binom{n-1-a}{a-1}=
\binom{(n-1)-a}{a}+\binom{(n-2)-(a-1)}{a-1}.
\end{displaymath}
This implies the claim.
\end{proof}

\subsection{Fully commutative Kostant elements}\label{s7.4}

{
Under the Robinson-Schensted correspondence, 
two-sided Kazhdan--Lusztig cells in type $A$ are in bijection with partitions 
of $n$, and the left cells in a given two-sided cell corresponding to $\lambda\vdash n$ 
are in bijection with the standard tableaux of shape $\lambda$. More specifically, 
the Robinson-Schensted correspondence gives a bijection between the elements $w$ in a 
left cell and pairs $(P(w),Q(w))$ of standard Young tableaux of shape $\lambda$ for which $Q(w)$ is fixed, by~\cite[Theorem~1.4]{KL} (for 
the present formulation of that result and a more elementary proof, 
see~\cite[Theorem~A]{A00}). In particular, two-sided cells of fully commutative permutations correspond to partitions with at most two rows and the value of the $\mathbf{a}$-function is given by the length of the second row. This follows from, for example,  \cite[Lemma~6.5]{MT}.}

\begin{corollary}\label{cor7.4-3}
We have $\mathbf{k}_n^a=\binom{n-a}{a}\frac{n!(n-2a+1)!}{a!(n-2a)!(n-a+1)!}$. 
\end{corollary}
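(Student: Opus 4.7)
The plan is to combine the fact, recalled as the first bullet of Subsection~\ref{s3.6}, that $\mathbf{K}$ is constant on Kazhdan--Lusztig left cells, with the Robinson--Schensted parametrization of fully commutative permutations discussed at the start of Subsection~\ref{s7.4}. The key point is that $\mathbf{k}_n^a$ should factor as a product of (i) the number of Kostant positive left cells of shape $(n-a,a)$ and (ii) the common cardinality of any such cell.

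For (i), I would use that each left cell in $S_n$ contains a unique Duflo involution, and that in type $A$ all involutions are Duflo (see the end of Subsection~\ref{s2.6}). Hence the Kostant positive left cells of shape $(n-a,a)$ are in bijection with the fully commutative Kostant involutions with $a$ caps, and by Corollary~\ref{cor7.3-3} there are exactly $\binom{n-a}{a}$ of these. For (ii), the left cells inside the two-sided cell of shape $(n-a,a)$ are parametrized by the choice of $Q$-tableau of that shape, while the elements inside a single left cell are parametrized by the free choice of $P$-tableau of the same shape. So each such left cell has $|\mathbf{SYT}_{(n-a,a)}|$ elements.

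It then remains to evaluate $|\mathbf{SYT}_{(n-a,a)}|$ via the hook length formula. For $a\le n-a$, the hooks in the first row contribute $(n-a+1)(n-a)\cdots(n-2a+2)\cdot(n-2a)!=\frac{(n-a+1)!(n-2a)!}{(n-2a+1)!}$, while the hooks in the second row contribute $a!$, yielding
\begin{displaymath}
|\mathbf{SYT}_{(n-a,a)}|=\frac{n!(n-2a+1)!}{a!(n-2a)!(n-a+1)!}.
\end{displaymath}
Multiplying by $\binom{n-a}{a}$ will produce the claimed formula. The main thing to be careful about is the hook length bookkeeping for two-row shapes and making sure that the bijection between fully commutative involutions with $\mathbf{a}$-value $a$ and left cells of shape $(n-a,a)$ is precisely the one sending $w$ to the $Q$-tableau of $\mathbf{RS}(w)=(P(w),P(w))$; both are routine given the setup already in the paper.
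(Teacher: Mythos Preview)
Your proposal is correct and follows essentially the same argument as the paper: both factor $\mathbf{k}_n^a$ as $\mathbf{ki}_n^a$ times the size of a left cell of shape $(n-a,a)$, invoke Corollary~\ref{cor7.3-3} for the first factor, and the Hook Formula for the second. You simply spell out the hook-length bookkeeping more explicitly than the paper does.
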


\begin{proof}
As recalled above, the two-sided cell which corresponds to our $a$ is indexed 
by the partition $(n-a,a)$. Since every Kazhdan--Lusztig left cell contains a unique involution,
the number $\mathbf{k}_n^a$ is the product of $\mathbf{ki}_n^a$
with the size of this left cell. The latter equals the number of standard
Young tableaux {$P(w)$} of shape $(n-a,a)$ {under the Robinson-Schensted correspondence.} 
Now the claim of our corollary follows from Corollary~\ref{cor7.3-3} and the Hook Formula.
\end{proof}

\subsection{Fully commutative involutions}\label{s7.5}

\begin{proposition}\label{cor7.5-1}
We have $\mathbf{mi}_n=\binom{n}{\lfloor\frac{n}{2}\rfloor}$. 
\end{proposition}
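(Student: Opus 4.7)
The plan is to apply the Robinson--Schensted correspondence. A classical theorem asserts that $w\in S_n$ is an involution if and only if $P(w)=Q(w)$, so that $\mathbf{RS}$ restricts to a bijection between involutions in $S_n$ and standard Young tableaux of size $n$. Combining this with the characterization recalled in Subsection~\ref{s4.1}, namely that an element of $S_n$ is fully commutative precisely when the shape of its $\mathbf{RS}$ image has at most two parts, we obtain a bijection between fully commutative involutions in $S_n$ and standard Young tableaux of size $n$ whose shape has at most two rows.

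It therefore suffices to count SYT with $n$ cells and at most two rows. Any such tableau has shape $(n-a,a)$ for some $a$ with $0\leq a\leq\lfloor\tfrac{n}{2}\rfloor$, and the Hook Formula, already used in the proof of Corollary~\ref{cor7.4-3}, evaluates to
$$f^{(n-a,a)}=\binom{n}{a}-\binom{n}{a-1},$$
with the convention $\binom{n}{-1}=0$. Summing over $a$ and telescoping yields
$$\mathbf{mi}_n=\sum_{a=0}^{\lfloor n/2\rfloor}\left(\binom{n}{a}-\binom{n}{a-1}\right)=\binom{n}{\lfloor n/2\rfloor},$$
which is the desired formula. I do not anticipate any substantive obstacle: the three ingredients (the $\mathbf{RS}$ correspondence for involutions, the Hook Formula, and the identification of fully commutative elements by their $\mathbf{RS}$ shape) are all standard and have either been invoked earlier in the paper or are classical facts about the symmetric group.
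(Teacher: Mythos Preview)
Your proof is correct and, in fact, considerably more direct than the argument given in the paper. All three ingredients you use (the RS correspondence restricting to a bijection between involutions and single standard Young tableaux, the characterization of fully commutative elements via two-row shapes, and the formula $f^{(n-a,a)}=\binom{n}{a}-\binom{n}{a-1}$) are either classical or are explicitly invoked elsewhere in the paper---indeed, the authors themselves write $\mathbf{mi}_n^a=\frac{n!(n-2a+1)!}{a!(n-2a)!(n-a+1)!}$ via the Hook Formula in Subsection~\ref{s7.6}, which is precisely your $f^{(n-a,a)}$, so the telescoping sum over $a$ is immediate.

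The paper's own proof proceeds quite differently. It works diagrammatically with the Temperley--Lieb picture of a fully commutative involution: depending on whether the strand at the leftmost point is a vertical line or the outermost of a nest of $i$ cups, one obtains the recursion
\[
\mathbf{mi}_n=\mathbf{mi}_{n-1}+\sum_{i=1}^{\lfloor n/2\rfloor}C_{i-1}\,\mathbf{mi}_{n-2i},
\]
and then shows, via a lattice-path argument in Pascal's triangle, that the central binomial coefficients $\binom{n}{\lfloor n/2\rfloor}$ satisfy the same recursion. This route has the virtue of staying close to the diagrammatics used throughout the paper and of exhibiting an explicit recursion, but it is more laborious. Your approach, by contrast, reduces the count to a one-line telescoping identity and requires no new combinatorial analysis. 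The authors mention after their proof that an alternative argument appears in \cite{SS}; yours is in the same spirit of exploiting known SYT enumeration rather than deriving a bespoke recursion.
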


\begin{proof}
This claim is easy to check for small values of $n$, so we need to 
show that both sides satisfy the same recursion. 

Let $d$ be a fully commutative involution. Consider the strand of $\mathtt{e}_d$
connected to the upper point $1$. If it is vertical, removing it results
in a fully commutative involution for $n-1$. If the strand is a
cup $X$ connecting $1$ to some point $2i$, there is 
a crossingless pairing of $2i-2$ points inside this cup. 
Removing $X$ and all cups contained inside it together with
the corresponding caps, we obtain a fully commutative involution
for $n-2i$. This implies that we have the following recursion for the
left hand side of our formula:
\begin{displaymath}
\mathbf{mi}_n=\mathbf{mi}_{n-1}+
\sum_{i=1}^{\lfloor\frac{n}{2}\rfloor}C_{i-1}\mathbf{mi}_{n-2i}.
\end{displaymath}

We claim the the middle binomial coefficients 
$\binom{n}{\lfloor\frac{n}{2}\rfloor}$ satisfy the same recursion.
Indeed, consider the Pascal triangle with the point $(x,y)$ 
corresponding to $\binom{x}{y}$, for all appropriate $x$ and $y$. 
Then $\binom{n}{i}$ is exactly the number of shortest paths between 
$(0,0)$ and $(n,i)$ in this triangle.

Assume first that $n=2k$. Then we need to prove that 
\begin{displaymath}
\binom{2k}{k}=\binom{2k-1}{k-1}+
\sum_{i=1}^{k}C_{i-1}\binom{2k-2i}{k-i}.
\end{displaymath}
Equivalently, we need to show that
\begin{displaymath}
\binom{2k-1}{k}= \sum_{i=1}^{k}C_{i-1}\binom{2k-2i}{k-i}.
\end{displaymath}
For $i=1,2,\dots,k$, let $\mathcal{P}_i$ be the set of all
shortest paths between $(0,0)$ and $(2k-1,k)$ such that 
the path goes through the point $(2(k-i),k-i)$ and $i$ is 
minimal with this property. The set of all paths between
$(0,0)$ and $(2k-1,k)$ is the disjoint union of the 
$\mathcal{P}_i$. There are $\binom{2k-2i}{k-i}$
shortest paths between $(2(k-i),k-i)$ and $(0,0)$.
The classical interpretation of Catalan numbers as paths
in a square
that do not cross the diagonal implies that  there
are exactly $C_{i-1}$ shortest paths between 
$(2k-1,k)$ and $(2(k-i),k-i)$ satisfying the
condition for the minimality of $i$. This establishes
the necessary recursion formula for the right hand side.

The case $n=2k+1$ is similar and left to the reader.
\end{proof}

Darij Grinberg informed us that the formula in Proposition~\ref{cor7.5-1}
can be found in \cite[Proposition~3]{SS} with a different proof.

\subsection{Proof of Theorem~\ref{thm5.6.1}\eqref{thm5.6.1-3}}\label{s7.6}

{ As remarked at the beginning of Section~\ref{s7.4}, the partition corresponding to $a$ is 
$(n-a,a)$, hence} $\mathbf{mi}_n^a=\frac{n!(n-2a+1)!}{a!(n-2a)!(n-a+1)!}$
by the Hook Formula. Using Corollary~\ref{cor7.3-3}, we have
\begin{displaymath}
\frac{\mathbf{ki}_n^a}{\mathbf{mi}_n^a}=
\frac{(n-a)!a!(n-2a)!(n-a+1)!}{a!(n-2a)!n!(n-2a+1)!}
=\frac{(n-a+1)(n-a)\dots(n-2a+2)}{n(n-1)\dots(n-a+1)}.
\end{displaymath}
Here both the numerator and the denominator are polynomials in
$n$ of degree $a$ and with leading coefficient $1$. The claim 
of Theorem~\ref{thm5.6.1}\eqref{thm5.6.1-3} follows.

\subsection{Proof of Theorem~\ref{thm5.6.1}\eqref{thm5.6.1-1}}\label{s7.7}

By Corollary~\ref{cor7.3-2}, the number $\mathbf{ki}_n$ is the
$n$-th Fibonacci number. It is given by the formula
\begin{displaymath}
\frac{(\frac{1+\sqrt{5}}{2})^n-(\frac{1-\sqrt{5}}{2})^n}{\sqrt{5}}.
\end{displaymath}
Since the absolute value of  $\frac{1-\sqrt{5}}{2}$ is less than $1$,
it follows that $F_n$ grows as $(\frac{1+\sqrt{5}}{2})^n$.

At the same time, if $n=2k$, the { central binomial coefficient 
$\binom{2k}{k}$ is not smaller than $\frac{4^{k}}{2k+1}$,
as follows directly from
\begin{displaymath}
4^k=2^{2k}=(1+1)^{2k}=\sum_{i=0}^{2k}\binom{2k}{i}.
\end{displaymath}
If $n=2k+1$, the coefficient $\binom{2k+1}{k}$ is not smaller 
than $\binom{2k}{k}$. This implies that
$\binom{n}{\lfloor\frac{n}{2}\rfloor}$ grows at least as fast as} 
$\frac{2^n}{n}$. Since $\frac{1+\sqrt{5}}{2}<2$, the
claim of Theorem~\ref{thm5.6.1}\eqref{thm5.6.1-1} follows.

\subsection{Proof of Theorem~\ref{thm5.6.1}\eqref{thm5.6.1-2}}\label{s7.8}

Using Corollary~\ref{cor7.4-3}, we need to show that 
\begin{equation}\label{eq7.8-1}
\sum_{a=0}^{\lfloor\frac{n}{2}\rfloor} 
\frac{(n+1)\binom{n-a}{a}}{\binom{2n}{n}}\cdot\frac{n!(n-2a+2)!}{a!(n-2a)!(n-a+1)!}\to
0,\quad n\to\infty.
\end{equation}
We rewrite the expression in \eqref{eq7.8-1} as
\begin{displaymath}
\sum_{a=0}^{\lfloor\frac{n}{2}\rfloor} 
\frac{(n+1)(n-a)!n!(n-2a+1)!n!n!}{a!(n-2a)!a!(n-2a)!(n-a+1)!(2n)!}
\end{displaymath}
and then, further, as
\begin{displaymath}
\sum_{a=0}^{\lfloor\frac{n}{2}\rfloor} 
\frac{(n+1)(n-a)!(n-a)!n!(n-2a+1)!n!n!a!}{a!(n-2a)!a!(n-2a)!(n-a+1)!(2n)!a!(n-a)!}
\end{displaymath}
and, finally, as 
\begin{equation}\label{eq7.8-2}
\sum_{a=0}^{\lfloor\frac{n}{2}\rfloor} 
\frac{(n+1)\binom{n-a}{a}\binom{n-a}{a}\binom{n}{a}}{\binom{n-a+1}{a}\binom{2n}{n}}.
\end{equation}
Note that $\binom{n-a}{a}\leq \binom{n-a+1}{a}$ and hence the expression in
\eqref{eq7.8-2} is bounded from above by 
\begin{equation}\label{eq7.8-3}
\sum_{a=0}^{\lfloor\frac{n}{2}\rfloor} 
\frac{(n+1)\binom{n-a}{a}\binom{n}{a}}{\binom{2n}{n}}.
\end{equation}
The Fibonacci coefficient $\binom{n-a}{a}$ is bounded by the
$n$-the Fibonacci number and hence grows at most as $(\frac{1+\sqrt{5}}{2})^n$.
The coefficient $\binom{n}{a}$ is bounded by 
$\binom{n}{\lfloor\frac{n}{2}\rfloor}$ and hence grows as $2^n$
up to some factor of at most polynomial growth. At the same time,
$\binom{2n}{n}$ grows as $4^n$ up to some factor of at most polynomial growth. 
As the number of summands is linear in $n$, it follows that the
whole expression \eqref{eq7.8-3} tends to $0$, when $n\to\infty$.
This proves Theorem~\ref{thm5.6.1}\eqref{thm5.6.1-2}.

\subsection{Conjectures}\label{s7.9}

Taking Theorem~\ref{thm5.6.1} into account, we conjecture the following
for general elements of $S_n$:
\begin{itemize}
\item Among all involutions in $S_n$, the proportion of those 
for which the answer to Kostant's problem is positive is asymptotically $0$.
\item The proportion of elements in $S_n$ for which the answer to
Kostant's problem is positive is asymptotically $0$.
\item Fix a partition $\lambda$ of some $m$ and consider, for $n>m$, 
the partition $\lambda^{(n)}$ of $n$ obtained from $\lambda$ by
increasing the first part by $n-m$. Then, among the elements
in $S_n$ belonging to the two-sided cell indexed
by the partition $\lambda^{(n)}$, the proportion of those for which the answer 
to Kostant's problem is positive  is asymptotically $1$.
\end{itemize}
{
We note that the recent results in \cite[Subsection~4.10]{MSr} support, 
in some mild sense, these conjectures.
}

\section{Kostant's problem and Barbasch--Vogan theorem for fiab bicategories}\label{s6}

\subsection{Fiat $2$-categories}\label{s6.1}

Let $\cC$ be a fiab bicategory in the sense of \cite{MMMTZ},
{ i.e. a finitary bicategory with weak
involution $\star$ and adjunction morphisms}. Consider the 
left, right and two-sided pre-orders $\leq_L$, $\leq_R$ and $\leq_J$
on the set $\mathcal{S}(\cC)$ of isomorphism classes of indecomposable
$1$-morphisms in $\cC$. { In particular, we have 
$\mathrm{F}\leq_L\mathrm{G}$ provided that there exists a $1$-morphism 
$\mathrm{H}$ such that $\mathrm{G}$ is isomorphic to a direct summand of 
$\mathrm{H}\mathrm{F}$. The other pre-orders are defined similarly,} 
see \cite[Section~3]{MM2} for details.

The associated equivalence classes are called cells (left, right and
two-sided, respectively). Each left and each right cell contains 
a unique special $1$-morphism called the Duflo $1$-morphism,
see \cite[Subsection~4.5]{MM1}.

\subsection{Kostant's problem for fiat bicategories}\label{s6.7}

Fix a left cell $\mathcal{L}$ in $\cC$. Then there is an
object $\mathtt{i}\in\cC$ such that all elements of $\mathcal{L}$
have $\mathtt{i}$ as a domain. Consider the abelianization
$\overline{\mathbf{P}}_\mathtt{i}$ of the principal
birepresentation ${\mathbf{P}}_\mathtt{i}$ of $\cC$
in the sense of \cite[Section~3]{MMMT}.

Denote by $\hat{\mathcal{L}}$ the set of all 
$\mathrm{F}\in \mathcal{S}(\cC)$ such that $\mathrm{F}\leq_L\mathcal{L}$. 
The collection of Serre subcategories of the 
${\mathbf{P}}_\mathtt{i}(\mathtt{j})$, for $\mathtt{j}\in \cC$,
generated by all simple objects $L_{\mathrm{F}}$, where $\mathrm{F}\in \hat{\mathcal{L}}$,
is invariant under the action of $\cC$. We denote the corresponding (abelian)
birepresentation of $\cC$ by $\overline{\mathbf{M}}_{\mathcal{L}}$
and its (finitary) restriction to projective objects by ${\mathbf{M}}_{\mathcal{L}}$.

On the other hand, let $\mathrm{D}_\mathcal{L}$ be the Duflo $1$-morphism
in $\mathcal{L}$ and consider the finitary birepresentation
${\mathbf{K}}_{\mathcal{L}}$ given by the action of $\cC$ on
the additive closure of $\cC L_{\mathrm{D}_\mathcal{L}}$.
The Yoneda morphism from $\mathbf{P}_{\mathtt{i}}$ to
${\mathbf{K}}_{\mathcal{L}}$ sending $\mathbbm{1}_{\mathtt{i}}$
to $L_{\mathrm{D}_\mathcal{L}}$ factors through ${\mathbf{M}}_{\mathcal{L}}$
by construction. We will say that $\mathrm{D}_\mathcal{L}$ is
{\em Kostant positive} provided that the induced morphism of birepresentations
from ${\mathbf{M}}_{\mathcal{L}}$ to ${\mathbf{K}}_{\mathcal{L}}$ is an equivalence.

{
Let us now look what happens in the case of the bicategory $\cP$.  
The identity object of $\cP$ is given by the quotient of the 
universal enveloping algebra modulo the annihilator of $\mathcal{O}_0$.
By \cite[Theorem~5.9]{BG}, $\mathcal{O}_0$ itself is equivalent to the 
subcategory of the principal birepresentation of 
$\cP$ generated by the quotient of the universal enveloping algebra
by the central character of $\mathcal{O}_0$.
Hence, we can view a simple module $L_w\in \mathcal{O}_0$
as a simple object of the principal birepresentation of $\cP$.
Take now $w=d$ to be the Duflo involution in some 
Kazhdan-Lusztig left cell $\mathcal{L}$.
Then the corresponding birepresentation ${\mathbf{M}}_{\mathcal{L}}$ 
as defined above is generated, as a birepresentation of $\cP$, by the
quotient of the universal enveloping algebra by the 
annihilator of $L_d$. At the same time, in
\cite[Proposition~7.2]{KMM} it is shown that the 
corresponding birepresentation 
${\mathbf{K}}_{\mathcal{L}}$ as defined above is generated by
$\mathcal{L}(L_p,L_p)$. Therefore, the fact that that
the natural embedding of ${\mathbf{M}}_{\mathcal{L}}$ into
${\mathbf{K}}_{\mathcal{L}}$ is an equivalence
is, indeed, equivalent to the fact that the answer to Kostant's 
problem for $L_d$ is positive, see \cite[Corollary~7.6]{KMM}.
}

Note that the kernel of the above Yoneda morphism from $\mathbf{P}_{\mathtt{i}}$ to
${\mathbf{K}}_{\mathcal{L}}$ is exactly the annihilator of $L_{\mathrm{D}_\mathcal{L}}$
in $\cC$, i.e. the left biideal of $2$-morphisms $\alpha$ in $\cC$ such that
$\mathbf{P}_{\mathtt{i}}(\alpha)_{L_{\mathrm{D}_\mathcal{L}}}=0$. 
This connects our reformulation of Kostant's problem in this more general
setup to the problem of studying annihilators of simple objects in 
principal birepresentations. This naturally leads to 
an analogue of the classical Barbasch--Vogan theorem for fiab bicategories,
presented in the next subsection.

\subsection{Barbasch--Vogan theorem for $\cC$}\label{s6.4}

Note that the abelianization $\overline{\mathbf{P}}_\mathtt{i}$ of 
${\mathbf{P}}_\mathtt{i}$ is an abelian birepresentation of $\cC$.
For $\mathtt{j}\in\cC$ and an indecomposable $1$-morphism
$\mathrm{F}\in\cC(\mathtt{i},\mathtt{j})$, 
denote by $\cJ_{\mathrm{F}}$ the annihilator
of $L_{\mathrm{F}}$ in $\cC$.
Then $\cJ_{\mathrm{F}}$ is a left biideal of $\cC$.

\begin{theorem}\label{thm6.4-1}
For two  indecomposable $1$-morphism $\mathrm{F}$ and $\mathrm{G}$
in $\cC$, the following conditions are equivalent:
\begin{enumerate}[$($a$)$]
\item \label{thm6.4-1-1} $\mathrm{F}\leq_R \mathrm{G}$.
\item \label{thm6.4-1-2} $\cJ_{\mathrm{G}}\subseteq \cJ_{\mathrm{F}}$ 
\end{enumerate}
\end{theorem}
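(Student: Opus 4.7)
My plan is to prove Theorem~\ref{thm6.4-1} by establishing the two implications separately, following the strategy of the classical Barbasch--Vogan theorem adapted to the bicategorical setting.

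For the direction $(a)\Rightarrow(b)$, I would assume $\mathrm{F}\leq_R\mathrm{G}$, so that $\mathrm{G}$ is a direct summand of $\mathrm{F}\mathrm{H}$ for some $1$-morphism $\mathrm{H}$. Using the biadjunction $\mathrm{H}\dashv\mathrm{H}^\star$ available in the fiab bicategory $\cC$, the adjunction identity $\mathrm{Hom}(\mathrm{H}P_{\mathrm{F}},L_{\mathrm{G}})\cong\mathrm{Hom}(P_{\mathrm{F}},\mathrm{H}^\star L_{\mathrm{G}})$, combined with the fact that $P_{\mathrm{G}}$ is a direct summand of $\mathrm{H}P_{\mathrm{F}}$ (which follows from $\mathrm{G}$ being a summand of $\mathrm{F}\mathrm{H}$), shows that $L_{\mathrm{F}}$ appears as a composition factor of $\mathrm{H}^\star L_{\mathrm{G}}$ in $\overline{\mathbf{P}}_\mathtt{i}$. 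Given $\alpha\in\cJ_{\mathrm{G}}$, I would then combine the naturality of the action of $\alpha$ on objects of $\overline{\mathbf{P}}_\mathtt{i}$ with the $2$-morphism data of the biadjunction (unit and counit) to show that $\alpha$ also annihilates $\mathrm{H}^\star L_{\mathrm{G}}$, and hence annihilates its subquotient $L_{\mathrm{F}}$, giving $\alpha\in\cJ_{\mathrm{F}}$.

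For the direction $(b)\Rightarrow(a)$, I would argue by contrapositive. Assuming $\mathrm{F}\not\leq_R\mathrm{G}$, the cell-theoretic structure of $\cC$ recalled in Subsection~\ref{s6.1} and developed in \cite{MM1,MM2} allows one to locate a $1$-morphism $\mathrm{K}$ (related to the Duflo $1$-morphism of the right cell of $\mathrm{F}$) such that $\mathrm{K}\cdot L_{\mathrm{F}}\neq 0$ and $\mathrm{K}\cdot L_{\mathrm{G}}=0$. A $2$-morphism produced from $\mathrm{K}$ (for instance, the identity $2$-morphism on $\mathrm{K}$, or a canonical map built from the adjunction structure) then lies in $\cJ_{\mathrm{G}}\setminus\cJ_{\mathrm{F}}$, contradicting the containment in $(b)$.

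The principal obstacle is the direction $(a)\Rightarrow(b)$, specifically the step of transferring the vanishing of $\alpha$ on $L_{\mathrm{G}}$ to the vanishing on $\mathrm{H}^\star L_{\mathrm{G}}$, and from there to the vanishing on the subquotient $L_{\mathrm{F}}$. In the classical setting of projective functors on category $\mathcal{O}$, the analogous step relies on Kazhdan--Lusztig positivity and the explicit structure of Soergel bimodules. In the general fiab setting, the required input should come from the cell $2$-representation theory of \cite{MM1,MM2,MMMT,MMMTZ}, but careful bookkeeping will be necessary to reconcile the various conventions for left biideals, biadjoints, and left/right cell orders, and to rule out the possibility that $\alpha$ is nonzero on composition factors of $\mathrm{H}^\star L_{\mathrm{G}}$ other than $L_{\mathrm{F}}$ in a way that fails to propagate to $L_{\mathrm{F}}$ itself.
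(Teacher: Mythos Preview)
Your overall strategy matches the paper's: for $(a)\Rightarrow(b)$ you use adjunction to exhibit $L_{\mathrm{F}}$ as a subquotient of $L_{\mathrm{G}}$ acted on by $\mathrm{H}^\star$, and for $(b)\Rightarrow(a)$ you argue by contrapositive via a $1$-morphism that kills $L_{\mathrm{G}}$ but not $L_{\mathrm{F}}$. This is exactly what the paper does.

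However, you have inverted the difficulty of the two directions. The step you flag as the ``principal obstacle'' is in fact formal and requires no Kazhdan--Lusztig positivity, no cell $2$-representation theory, and no delicate bookkeeping. The paper simply records two elementary observations about the regular $\cC$-$\cC$-bimodule: first, $\mathrm{Ann}_{\cC}(M)\subseteq\mathrm{Ann}_{\cC}(M\mathrm{H}^\star)$ for any $1$-morphism $\mathrm{H}^\star$, because the right action of $\mathrm{H}^\star$ commutes with the left action of $\cC$ (this is just the interchange law in a bicategory, so $\mathbf{P}_\mathtt{i}(\alpha)_{M\mathrm{H}^\star}=\mathbf{P}_\mathtt{i}(\alpha)_M\cdot\mathrm{id}_{\mathrm{H}^\star}=0$); second, $\mathrm{Ann}_{\cC}(M)\subseteq\mathrm{Ann}_{\cC}(N)$ whenever $N$ is a subquotient of $M$, because the left action of $\cC$ on $\overline{\mathbf{P}}_\mathtt{i}$ is by exact functors. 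Your worry that $\alpha$ might be ``nonzero on composition factors of $\mathrm{H}^\star L_{\mathrm{G}}$ other than $L_{\mathrm{F}}$ in a way that fails to propagate'' is a red herring: once $\alpha$ vanishes on the whole of $L_{\mathrm{G}}\mathrm{H}^\star$, exactness forces it to vanish on every subquotient, including $L_{\mathrm{F}}$, regardless of what happens on the other composition factors.

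For $(b)\Rightarrow(a)$ you are on the right track, but the paper does not use the Duflo $1$-morphism of the right cell of $\mathrm{F}$; it invokes \cite[Lemma~12]{MM1} directly to say that if $\mathrm{F}\not\leq_R\mathrm{G}$ then every $1$-morphism in the two-sided cell of $\mathrm{F}$ annihilates $L_{\mathrm{G}}$, while some $1$-morphism in that cell does not annihilate $L_{\mathrm{F}}$. The identity $2$-morphism on that $1$-morphism then lies in $\cJ_{\mathrm{G}}\setminus\cJ_{\mathrm{F}}$. (The Duflo $1$-morphism appears only in the follow-up Proposition~\ref{prop6.4-2}, and there it is the Duflo $1$-morphism in the cell of $\mathrm{G}$, not $\mathrm{F}$.) This is the only place where genuine input from cell theory is needed, and it is a single clean citation rather than a subtle argument.
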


\begin{proof}

In this proof, we will use the word ``module'' instead of 
``birepresentation''. The direct sum of all principal $\cC$-modules
has the obvious structure of a $\cC$-$\cC$-bimodule, the regular
$\cC$-$\cC$-bimodule. 

Given an object $M$ of this regular $\cC$-$\cC$-bimodule and any 
$1$-morphism $\mathrm{F}\in\cC$ which acts on $M$ on the right, we have
\begin{displaymath}
\mathrm{Ann}_{\ccC}(M)\subset  \mathrm{Ann}_{\ccC}(M\mathrm{F})
\end{displaymath}
of left annihilators.
Also, if some $L_{\mathrm{G}}$ is a subquotient of $M$, we have
\begin{displaymath}
\mathrm{Ann}_{\ccC}(M)\subset  \mathrm{Ann}_{\ccC}(L_{\mathrm{G}}).
\end{displaymath}

Let $\mathrm{F}$ and $\mathrm{G}$ be two $1$-morphisms in $\cC$. Then 
$\mathrm{F}\leq_R \mathrm{G}$ if and only if there exists
$\mathrm{H}$ in $\cC$ such that $\mathrm{G}$ is a summand of $\mathrm{F}\mathrm{H}$.
By adjunction, this is equivalent to
\begin{displaymath}
0\neq\mathrm{Hom}_{\overline{\ccC}}(\mathrm{F}\mathrm{H},L_{\mathrm{G}})
\cong \mathrm{Hom}_{\overline{\ccC}}(\mathrm{F},L_{\mathrm{G}}\mathrm{H}^\star).
\end{displaymath}
In other words, $\mathrm{F}\leq_R \mathrm{G}$ is equivalent to the existence
of $\mathrm{H}$ such that $L_{\mathrm{F}}$ is a subquotient of 
$L_{\mathrm{G}}\mathrm{H}^\star$. The previous paragraph now yields
that $\mathrm{F}\leq_R \mathrm{G}$ implies 
$\cJ_{\mathrm{G}}\subset \cJ_{\mathrm{F}}$.

For the converse, note that if $\mathrm{F}\nleq_R \mathrm{G}$, then either $\mathrm{F}>_R \mathrm{G}$, or $\mathrm{F}$ and $\mathrm{G}$ are not comparable in the right order.

If $\mathrm{G}<_R \mathrm{F}$, then all elements in the
two-sided cell of $\mathrm{F}$ annihilate $L_{\mathrm{G}}$ by
\cite[Lemma~12]{MM1}. By the same lemma, there are elements in the 
two-sided cell of $\mathrm{F}$ that do not annihilate $L_{\mathrm{F}}$.
Therefore $\cJ_{\mathrm{F}}\subsetneq \cJ_{\mathrm{G}}$.

If $\mathrm{F}$ and $\mathrm{G}$ are not comparable in the right order, then the
two-sided cells of $\mathrm{F}$  and $\mathrm{G}$ annihilate $L_\mathrm{G}$ and $L_\mathrm{F}$, 
respectively, again by \cite[Lemma~12]{MM1}. On the other hand, the two sided cell of $\mathrm{F}$ 
does not annihilate $L_\mathrm{F}$, and similarly that of $\mathrm{G}$ does not annihilate 
$L_\mathrm{G}$. Thus the annihilators are not comparable.

This completes the proof.
\end{proof}

We can also give slightly more detailed information.

\begin{proposition}\label{prop6.4-2}
In the setup of  Theorem~\ref{thm6.4-1}, if 
$\mathrm{F}<_R \mathrm{G}$ and $\mathtt{i}$ is the codomain
for both $\mathrm{F}$ and $\mathrm{G}$, then 
there is $\mathrm{H}\in\cC(\mathtt{i},\mathtt{i})$ such that 
\begin{displaymath}
\dim \mathrm{Hom}_{\ccC/\ccJ_{\mathrm{G}}}(\mathrm{H},\mathbbm{1}_{\mathtt{i}}) 
\neq
\dim\mathrm{Hom}_{\ccC/\ccJ_{\mathrm{F}}}(\mathrm{H},\mathbbm{1}_{\mathtt{i}}) .
\end{displaymath} 
\end{proposition}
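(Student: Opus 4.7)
The plan is to convert the strict inclusion of annihilator biideals supplied by Theorem~\ref{thm6.4-1} into a gap of hom-space dimensions by bending a separating $2$-morphism along the $\star$-adjunction of the fiab bicategory $\cC$.

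First, I would invoke Theorem~\ref{thm6.4-1}: the hypothesis $\mathrm{F}<_R\mathrm{G}$ gives $\cJ_\mathrm{G}\subsetneq\cJ_\mathrm{F}$, so I may pick a $2$-morphism $\alpha:\mathrm{X}\to\mathrm{Y}$ in $\cJ_\mathrm{F}\setminus\cJ_\mathrm{G}$. Since $L_\mathrm{F}$ and $L_\mathrm{G}$ live in the abelianized principal birepresentation $\overline{\mathbf{P}}_{\mathtt{j}}$ at the common domain $\mathtt{j}$ of $\mathrm{F}$ and $\mathrm{G}$, the $1$-morphisms $\mathrm{X},\mathrm{Y}$ are forced to have $\mathtt{i}$ as their common source; say $\mathrm{X},\mathrm{Y}\in\cC(\mathtt{i},\mathtt{k})$ for some $\mathtt{k}$. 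Next, using the adjunction between $\mathrm{Y}$ and $\mathrm{Y}^\star$ in the fiab bicategory, I would form $\mathrm{H}\in\cC(\mathtt{i},\mathtt{i})$ as the appropriate composite of $\mathrm{X}$ and $\mathrm{Y}^\star$, and define $\beta:\mathrm{H}\to\mathbbm{1}_{\mathtt{i}}$ to be the mate of $\alpha$, namely the vertical composite of $\alpha$ horizontally padded by $\mathrm{id}_{\mathrm{Y}^\star}$ with the counit $\varepsilon_{\mathrm{Y}}$ of the adjunction.

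The key verification is then that $\beta\in\cJ_\mathrm{F}\setminus\cJ_\mathrm{G}$. Membership in $\cJ_\mathrm{F}$ is automatic, since $\cJ_\mathrm{F}$ is a biideal closed under both horizontal and vertical composition. For $\beta\notin\cJ_\mathrm{G}$, the mate construction descends to the birepresentation and, via naturality of the unit and the zig-zag identity, yields the adjunction isomorphism
\begin{displaymath}
\mathrm{Hom}_{\overline{\mathbf{P}}_{\mathtt{j}}}\!\bigl(\mathrm{X}\,L_\mathrm{G},\,\mathrm{Y}\,L_\mathrm{G}\bigr)\;\cong\;\mathrm{Hom}_{\overline{\mathbf{P}}_{\mathtt{j}}}\!\bigl(\mathrm{H}\,L_\mathrm{G},\,L_\mathrm{G}\bigr),
\end{displaymath}
identifying $\mathbf{P}_{\mathtt{j}}(\alpha)_{L_\mathrm{G}}$ with $\mathbf{P}_{\mathtt{j}}(\beta)_{L_\mathrm{G}}$. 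Since $\alpha\notin\cJ_\mathrm{G}$, the former is nonzero, hence so is the latter.

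To conclude, the inclusion $\cJ_\mathrm{G}\subseteq\cJ_\mathrm{F}$ induces a canonical surjection $\mathrm{Hom}_{\cC/\cJ_\mathrm{G}}(\mathrm{H},\mathbbm{1}_{\mathtt{i}})\twoheadrightarrow\mathrm{Hom}_{\cC/\cJ_\mathrm{F}}(\mathrm{H},\mathbbm{1}_{\mathtt{i}})$, and the class of $\beta$ is nonzero in the source but zero in the target, so the dimensions must differ. The hard part will be the verification in the previous paragraph: that bending via the mate construction preserves both membership in $\cJ_\mathrm{F}$ and non-membership in $\cJ_\mathrm{G}$. This boils down to compatibility between the chosen $\star$-adjunction in $\cC$ and the induced adjunctions of the acting endofunctors on $\overline{\mathbf{P}}_{\mathtt{j}}$, and is a standard zig-zag computation; the remaining steps are formal once $\cJ_\mathrm{G}\subsetneq\cJ_\mathrm{F}$ has been extracted from Theorem~\ref{thm6.4-1}.
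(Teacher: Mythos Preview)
Your argument is correct, but it is genuinely different from the paper's. The paper makes an explicit choice of $\mathrm{H}$: it takes $\mathrm{H}$ to be the Duflo $1$-morphism in the right cell of $\mathrm{G}$, then uses the cell-theoretic lemma \cite[Lemma~12]{MM1} to conclude $\mathrm{H}\,L_{\mathrm{F}}=0$, so that every element of $\mathrm{Hom}_{\ccC}(\mathrm{H},\mathbbm{1}_{\mathtt{i}})$ lies in $\cJ_{\mathrm{F}}$, while the defining morphism $\mathrm{H}\to\mathbbm{1}_{\mathtt{i}}$ of the Duflo $1$-morphism is not in $\cJ_{\mathrm{G}}$. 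Your route is purely formal: you extract the strict inclusion $\cJ_{\mathrm{G}}\subsetneq\cJ_{\mathrm{F}}$ from Theorem~\ref{thm6.4-1}, pick any separating $2$-morphism $\alpha$, and bend it via the $\star$-adjunction to a $2$-morphism $\beta:\mathrm{Y}^\star\mathrm{X}\to\mathbbm{1}_{\mathtt{i}}$ still separating the two ideals. Your approach avoids both the Duflo $1$-morphism and \cite[Lemma~12]{MM1}, at the cost of producing a less explicit and generally decomposable $\mathrm{H}$; the paper's approach is more constructive and reveals where the obstruction sits in the cell structure.

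One small inaccuracy to clean up: you refer to ``the common domain $\mathtt{j}$ of $\mathrm{F}$ and $\mathrm{G}$'', but the hypothesis only gives a common \emph{codomain} $\mathtt{i}$; the domains of $\mathrm{F}$ and $\mathrm{G}$ may differ, so $L_{\mathrm{F}}$ and $L_{\mathrm{G}}$ may live in different principal birepresentations. This does not affect your argument, since any $\alpha\notin\cJ_{\mathrm{G}}$ is automatically a $2$-morphism between $1$-morphisms with source $\mathtt{i}$, and such an $\alpha$ still acts on $L_{\mathrm{F}}$ (because $\mathrm{F}$ also has codomain $\mathtt{i}$). Just rephrase that sentence accordingly.
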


\begin{proof}
We take $\mathrm{H}$ to be the Duflo $1$-morphism in the 
right cell of $\mathrm{G}$. Then $\mathrm{H}L_{\mathrm{F}}=0$
by \cite[Lemma~12]{MM1} and hence the evaluation of any
element of $\mathrm{Hom}_{\ccC}(\mathrm{H},\mathbbm{1}_{\mathtt{i}})$
at $L_{\mathrm{F}}$ is the zero morphism. At the same time, the evaluation of
the morphism $\mathrm{H}\to \mathbbm{1}_{\mathtt{i}}$ which defines
$\mathrm{H}$ as a Duflo $1$-morphism
(see \cite[Subsection~4.5]{MM1}) at $L_{\mathrm{G}}$ is non-zero.
The claim follows.
\end{proof}

\subsection{Classical Barbasch--Vogan theorem}\label{s6.5}

The above proposition implies the following classical
result due to Barbasch and Vogan, see \cite{BV1,BV2}.

\begin{corollary}\label{cor6.5-1}
Let $\mathfrak{g}$ be a semi-simple finite dimensional complex 
Lie algebra with Weyl group $W$. Then, for $x,y\in W$, we have
$\mathrm{Ann}_{U(\mathfrak{g})}(L_x)\subseteq \mathrm{Ann}_{U(\mathfrak{g})}(L_y)$
if and only if $y\leq_L x$, where $\leq_L$ is the Kazhdan--Lusztig
left order on $W$.
\end{corollary}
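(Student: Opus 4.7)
The plan is to derive Corollary~\ref{cor6.5-1} as a direct specialisation of Theorem~\ref{thm6.4-1} applied to the fiab bicategory $\cP$ of projective functors on $\mathcal{O}_0$. Concretely, I would invoke the theorem with $\mathrm{F}=\theta_y$ and $\mathrm{G}=\theta_x$, reading off the equivalence between $\theta_y\leq_R\theta_x$ in $\cP$ and $\cJ_{\theta_x}\subseteq\cJ_{\theta_y}$, and then translate both conditions into classical language using the dictionary provided by Subsections~\ref{s2.4}--\ref{s2.6}.

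For the annihilators, I would use the identification spelled out in Subsection~\ref{s6.7}: the identity $1$-morphism $\mathbbm{1}$ of $\cP$ is $U(\mathfrak{g})/\mathrm{Ann}_{U(\mathfrak{g})}(\mathcal{O}_0)$, and each $L_w$ appears as a simple object of the principal birepresentation. Since this birepresentation is generated by $\mathbbm{1}$, the biideal $\cJ_{\theta_w}$ is detected by its component at $\mathbbm{1}$, which is precisely the image of $\mathrm{Ann}_{U(\mathfrak{g})}(L_w)$. Hence $\cJ_{\theta_x}\subseteq\cJ_{\theta_y}$ is equivalent to $\mathrm{Ann}_{U(\mathfrak{g})}(L_x)\subseteq\mathrm{Ann}_{U(\mathfrak{g})}(L_y)$.

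For the pre-order, I would unwind the definition from Subsection~\ref{s6.1}: the statement $\theta_y\leq_R\theta_x$ means that $\theta_x$ is a direct summand of $\theta_y\theta_H$ for some $1$-morphism $\mathrm{H}$. Passing to the split Grothendieck group, and using that, because the action of $\cP^{\mathbb{Z}}$ on $\mathcal{O}_0^{\mathbb{Z}}$ is \emph{right} regular, the map $[\theta_w]\mapsto\underline{H}_w$ of Subsection~\ref{s2.6} reverses the order of composition, this becomes the assertion that $\underline{H}_x$ occurs with positive integer coefficient at $v=1$ in $\underline{H}_H\underline{H}_y$ for some $H$. In the paper's convention for the Kazhdan--Lusztig left pre-order (consistent with the formulation in Subsection~\ref{s3.1} and with Lemma~\ref{l5.25-1}), this is precisely $y\leq_L x$ in $W$.

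Assembling these two translations with Theorem~\ref{thm6.4-1} then yields Corollary~\ref{cor6.5-1}. The only delicate point is the bookkeeping of pre-order conventions: the right-regular decategorification forces the Grothendieck map to reverse composition, so a careless identification would invert the direction of the final inclusion. Once this reversal is accounted for, the right pre-order $\leq_R$ on $\cP$ and the left pre-order $\leq_L$ on $W$ match up as required, and the argument is essentially mechanical.
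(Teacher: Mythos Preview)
Your overall strategy matches the paper's: both specialise Theorem~\ref{thm6.4-1} to the fiab bicategory $\cP$ and then translate. The pre-order translation you sketch is the same left/right swap the paper records. The gap is in your step~(B), the identification of annihilators.

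First, the phrase ``component at $\mathbbm{1}$'' is not what you want. In $\cP$, the space $\mathrm{End}_{\ccP}(\mathbbm{1})=\mathrm{End}_{\ccP}(\theta_e)$ is the centre of $\mathcal{O}_0$, i.e.\ the coinvariant algebra, not $U(\mathfrak{g})$ modulo the central character. A generic element of $U(\mathfrak{g})$ is \emph{not} a $2$-morphism in $\cP$, so $\mathrm{Ann}_{U(\mathfrak{g})}(L_w)$ cannot literally be a piece of the biideal $\cJ_{\theta_w}$. The paper instead passes through the Hom space $\mathrm{Hom}_{\ccP}\big(\bigoplus_{w}\theta_w,\theta_e\big)$ and the Bernstein--Gelfand equivalence \cite[Theorem~5.9]{BG} to access $U(\mathfrak{g})/\chi$.

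Second, and more seriously, even with the correct identification, the implication $\mathrm{Ann}_{U(\mathfrak{g})}(L_x)\subseteq\mathrm{Ann}_{U(\mathfrak{g})}(L_y)\Rightarrow\cJ_{\theta_x}\subseteq\cJ_{\theta_y}$ does \emph{not} follow from ``the birepresentation is generated by $\mathbbm{1}$''. That hypothesis gives you no control over a $2$-morphism $\alpha:\theta_u\to\theta_v$ with $u,v\neq e$; knowing that the $U(\mathfrak{g})$-annihilators are nested does not a priori force $\alpha_{L_y}=0$ whenever $\alpha_{L_x}=0$. The paper avoids this direction entirely: it only extracts $\cJ_{\theta_x}\subseteq\cJ_{\theta_y}\Rightarrow\mathrm{Ann}_{U(\mathfrak{g})}(L_x)\subseteq\mathrm{Ann}_{U(\mathfrak{g})}(L_y)$ from Theorem~\ref{thm6.4-1}, and for the converse argues contrapositively, invoking Proposition~\ref{prop6.4-2} to produce an explicit $\mathrm{H}\in\cP(\mathtt{i},\mathtt{i})$ witnessing $\mathrm{Ann}_{U(\mathfrak{g})}(L_x)\not\subseteq\mathrm{Ann}_{U(\mathfrak{g})}(L_y)$ when $y<_L x$, together with the incomparable case. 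You need that extra ingredient (or an honest proof of your claimed equivalence) to close the argument.
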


The appearance of the left order in Corollary~\ref{cor6.5-1},
compared to the right order in Theorem~\ref{thm6.4-1}, is due to 
the right nature of the action of the bicategory of
projective functors on  category $\mathcal{O}$.

\begin{proof}
Consider the  bicategory $\cP$ of projective functors acting on 
$\mathcal{O}_0$. The latter is, naturally, a subbirepresentation
of the abelianized principal birepresentation $\overline{\mathbf{P}}$
and is equivalent to a certain category of Harish-Chandra
bimodules for $\mathfrak{g}$ by \cite[Theorem~5.9]{BG}. 
This equivalence matches the indecomposable projective object $P_{\theta_e}$
corresponding to the identity with the quotient of 
$U(\mathfrak{g})$ modulo the trivial central character. 

Sending $P_{\theta_e}$ to the dominant Verma module
in $\mathcal{O}_0$ using the Yoneda Lemma, defines a morphism 
of birepresentations from $\overline{\mathbf{P}}$
to $\mathcal{O}_0$ which sends simple objects to simple objects.
By  Theorem~\ref{thm6.4-1}, $y\leq_L x$ implies
$\mathrm{Ann}_{\ccP}(L_x)\subset \mathrm{Ann}_{\ccP}(L_y)$.

In particular, 
\begin{displaymath}
\mathrm{Hom}_{\mathrm{Ann}_{\ccP}(L_x)} 
\big(\bigoplus_{w\in W}\theta_w,\theta_e\big)\subseteq
\mathrm{Hom}_{\mathrm{Ann}_{\ccP}(L_y)} 
\big(\bigoplus_{w\in W}\theta_w,\theta_e\big)\subseteq
\mathrm{Hom}_{\ccP} 
\big(\bigoplus_{w\in W}\theta_w,\theta_e\big),
\end{displaymath}
where the latter corresponds to the quotient of $U(\mathfrak{g})$
modulo the ideal generated by the 
trivial central character under the equivalence from \cite[Theorem~5.9]{BG}. 
Hence $\mathrm{Ann}_{U(\mathfrak{g})}(L_x)\subseteq \mathrm{Ann}_{U(\mathfrak{g})}(L_y)$.

Since $\cP$ has only one object, the fact that $y<_L x$ implies 
$\mathrm{Ann}_{U(\mathfrak{g})}(L_x)\subsetneq \mathrm{Ann}_{U(\mathfrak{g})}(L_y)$
follows directly from Proposition~\ref{prop6.4-2}. As in the proof of Theorem \ref{thm6.4-1}, 
if $x$ and $y$ are not comparable in the left order, 
their annihilators are incomparable, which completes the proof.
\end{proof}


\noindent
M.~M.: Center for Mathematical Analysis, Geometry, and Dynamical Systems, Departamento de Matem{\'a}tica, 
Instituto Superior T{\'e}cnico, 1049-001 Lisboa, PORTUGAL \& Departamento de Matem{\'a}tica, FCT, 
Universidade do Algarve, Campus de Gambelas, 8005-139 Faro, PORTUGAL, email: {\tt mmackaay\symbol{64}ualg.pt}

\noindent
Vo.~Ma.: Department of Mathematics, Uppsala University, Box. 480,
SE-75106, Uppsala, SWEDEN, email: {\tt mazor\symbol{64}math.uu.se}

\noindent
Va.~Mi.: School of Mathematics, University of East Anglia, Norwich, 
NR4 7TJ, UK, email: {\tt V.Miemietz\symbol{64}uea.ac.uk}

\end{document}